\setlist{itemsep=0pt}
\newtheorem{theorem}{Theorem}
\newtheorem{lemma}{Lemma}
\newcommand{\pL}{\mathrm{L}}
\newcommand{\SL}{\mathrm{SL}}
\newcommand{\GL}{\mathrm{GL}}
\newcommand{\F}{\mathbb{F}}
\newcommand{\N}{\mathbb{N}}
\newcommand{\C}{\mathbb{C}}
\newcommand{\Aut}{{\rm Aut}}
\newcommand{\Out}{\mathrm {Out}}
\newcommand{\Hom}{{\rm Hom}}
\newcommand{\End}{{\rm End}}
\renewcommand{\char}{{\rm char}}
\newcommand{\rC}{{\rm C}}
\newcommand{\rN}{{\rm N}}
\newcommand{\rR}{{\rm R}}
\newcommand{\rZ}{{\rm Z}}
\newcommand{\gk}{{\kappa}}
\begin{document}
\title{The soluble radical and orbits of certain maps on finite groups}
\author{David Popović, John S. Wilson \\
\\
In memoriam Jan Saxl}
\maketitle

\begin{abstract}
For each element $u$ in a finite group $G$ define a map $\theta_u\colon G\to G$ by $\theta_u(g)=[g^{-u},g]$ and set $\Theta_G(u)=\{g\in G\mid \theta_u^n(g)=g \hbox{ for some } n>0\}$.
Then $\theta_u$ induces a permutation of $\Theta_G(u)$; let $\beta_G(u)$ be the number of orbits apart from $\{1\}$.  Building on work of J.N.\ Bray, R.A.\ Wilson and the second author, we show that the index of the soluble radical of a finite group $G$ is bounded in terms of the values of $\beta_G(u)$ for $2$-elements $u$.	
\smallskip

\noindent Keywords: finite group, soluble radical, word maps

\noindent AMS classification (2020): 20D10, 20D05, 20F45 
\end{abstract}

\section{Introduction}
In $2005$, Bray, Wilson and Wilson \cite{bray2005characterization} characterized the finite soluble groups by laws in two variables, by describing a recursively defined sequence $s_n(x, y)$ of words with the property that a finite group $G$ is soluble if and only if $s_n$ is a law in $G$ for all sufficiently large integers $n$.  The characterization can be reformulated as follows.  For each element $u$ of a group $G$ define $\theta_u\colon G\to G$ by $$\theta_u(g)=[g^{-u},g]$$ (where $[x,y]=x^{-1}y^{-1}xy$, $x^y = y^{-1}xy$ and $x^{-y} = (x^y)^{-1}$), and let $$\Theta_G(u)=\{g\in G\mid \theta_u^n(g)=g \hbox{ for some } n>0\}.$$  Since $\theta_u^n(g)$ lies in the $n$th term of the derived series of $G$ for all $u,g\in G$ and all positive integers $n$, if $G$ is soluble then $\Theta_G(u)=\{1\}$ for each $u$. The result in  \cite{bray2005characterization} shows the converse for finite groups: if $G$ is finite and $\Theta_G(u)=\{1\}$ for all $u$ then $G$ is soluble. 

In this paper, we are concerned with groups $G$ for which the sets $\Theta_G(u)$ are not necessarily trivial, but are rather small.
Clearly for each element $u$ the map $\theta_u$ acts as a permutation on the set $\Theta_G(u)$, and $\{1\}$ is an orbit; we call the other orbits {\em eventual orbits} of $\theta_u$ and write $\beta_G(u)$ for the number of such orbits in $G$.  Thus the result of 
 \cite{bray2005characterization} shows that the group $G$ is soluble if and only if $\beta_G(u)=0$ for each element $u\in G$.
We recall that the soluble radical $\rR(G)$ is the unique largest normal soluble subgroup of a finite group $G$.  It is reasonable to speculate that if $|\Theta_G(u)|$ is small for each $u\in G$ then the quotient $G/\rR(G)$ should also be small.  We prove more than this. 

\begin{theorem}
	Let $G$ be a finite  group and suppose that $\beta_G(u)\leqslant \gk$ for each $2$-element $u\in G$. Then
	$$|G\colon \rR(G)| \leqslant \gk^{200+\log \log \gk}.$$
\end{theorem}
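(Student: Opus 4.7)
The plan is to reduce to the case $\rR(G)=1$ and then to analyse the resulting group via the classification of finite simple groups. The word map $\theta_u$ is compatible with quotients: $\theta_{\bar u}(\bar g)=\overline{\theta_u(g)}$ in $G/\rR(G)$. A short chase shows that the projection $\Theta_G(u)\to\Theta_{G/\rR(G)}(\bar u)$ is $\theta$-equivariant and surjective — given $\bar g\in\Theta_{G/\rR(G)}(\bar u)$, choose any lift $g$ and apply $\theta_u$ repeatedly; the result eventually lies in $\Theta_G(u)$ and projects into the orbit of $\bar g$. Hence this projection induces a surjection on eventual orbits and $\beta_{G/\rR(G)}(\bar u)\leqslant\beta_G(u)$. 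Since every $2$-element of $G/\rR(G)$ lifts to a $2$-element of $G$ (take any lift, pass to its $2$-part, and adjust by an appropriate power), the hypothesis descends, and we may assume $\rR(G)=1$.

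Under this assumption the socle $S=\mathrm{soc}(G)=T_1\times\cdots\times T_k$ is a direct product of non-abelian finite simple groups, and $G$ embeds into $\bigl(\prod_i\Aut(T_i)\bigr)\rtimes P$ where $P\leqslant S_k$ permutes isomorphic factors. The plan is then to bound $|S|$ and $|G:S|$ separately. The principal technical input, and the main obstacle, is to establish the following key estimate: there exists an absolute constant $\epsilon>0$ such that every non-abelian finite simple group $T$ admits a $2$-element $u\in T$ with $\beta_T(u)\geqslant|T|^{\epsilon}$. This would be proved by CFSG: for alternating groups take an involution with many disjoint transpositions and exploit the resulting cycle structure; for groups of Lie type use an involution whose centraliser is a large reductive subgroup, and analyse $\theta_u$ on regular semisimple classes inside that centraliser; sporadic groups are checked individually. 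The delicate point is keeping $\epsilon$ uniformly bounded away from zero across all the families.

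Granted this estimate, in each $T_i$ choose a $2$-element $u_i$ with $\beta_{T_i}(u_i)\geqslant|T_i|^{\epsilon}$ and set $u=(u_1,\dots,u_k)\in S$, which is again a $2$-element. Since $\theta_u$ acts componentwise on the direct product, eventual orbits on $S$ decompose as products of eventual orbits on the factors, giving
\[
\beta_S(u)+1\;\geqslant\;\prod_{i=1}^k\bigl(\beta_{T_i}(u_i)+1\bigr)\;\geqslant\;|S|^{\epsilon},
\]
so $|S|\leqslant(\gk+1)^{1/\epsilon}$. As each $|T_i|\geqslant 60$, this also forces $k=O(\log\gk)$. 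Combining with the standard CFSG bound $|\Out(T)|\leqslant c\log|T|$ yields
\[
|G:S|\;\leqslant\;k!\cdot\prod_{i=1}^k|\Out(T_i)|\;\leqslant\;(\log\gk)^{O(\log\gk)}\;=\;\gk^{O(\log\log\gk)}.
\]
Multiplying by $|S|$ and tuning the absolute constants produces the bound $|G|\leqslant\gk^{200+\log\log\gk}$, as required. The one substantive step that is not routine is the key estimate on simple groups; the remainder is standard CFSG bookkeeping together with the elementary product-decomposition above.
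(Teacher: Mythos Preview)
Your outline is correct and matches the paper's strategy almost exactly: reduce to $\rR(G)=1$, analyse the socle $S=T_1\times\cdots\times T_k$, invoke a CFSG-based lower bound on $\beta$ for simple groups (your ``key estimate'' is the paper's Theorem~2), use the product formula for $\beta$ on direct products, and bound the part of $G$ above the socle by factorial-type estimates. The one substantive difference is in how you split $G$. You bound $|S|$ and $|G:S|$ separately, handling the latter via $|\Out(T)|\leqslant c\log|T|$. The paper instead lets $K$ be the kernel of the permutation action of $G$ on $\{T_1,\dots,T_k\}$ and observes that each $K/\rC_K(T_i)$ is almost simple; the lemmas underlying Theorem~2 actually give $|\Aut(T_i)|\leqslant\kappa_i^{200}$ (not merely $|T_i|\leqslant\kappa_i^{200}$), so $|K|\leqslant\prod\kappa_i^{200}\leqslant\kappa^{200}$ directly, and only the permutation quotient $G/K\hookrightarrow S_k$ contributes the $\log\log\kappa$ term. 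Your route needs the extra CFSG input on $|\Out(T)|$ and yields $|G|\leqslant\kappa^{1/\epsilon+C\log\log\kappa}$ for an implicit constant $C$; getting $C=1$ (for the stated exponent $200+\log\log\kappa$) is not automatic from your bounds, whereas the paper's bookkeeping, using $\beta_{T_i}(u_i)\geqslant 8$ to force $k\leqslant\log_8\kappa$, delivers it cleanly.
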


We do not know whether there is a polynomial bound for $|G\colon \rR(G)|$ in terms of $\kappa$. 

The crucial ingredient in the proof of Theorem 1 is the following result:   

\begin{theorem}
Let $G$ be a finite almost simple group such that $\beta_G(u)\leqslant \gk$ for each $u\in G$ of order $2$ or $4$. Then $\lvert G \rvert \leqslant \gk^{200}$.
\end{theorem}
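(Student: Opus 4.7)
I would prove the contrapositive: for every finite almost simple group $G$, there exists $u\in G$ of order $2$ or $4$ with $\beta_G(u)\geqslant|G|^{1/200}$. A first reduction replaces $G$ by its socle $S$: since $S\trianglelefteq G$, the map $\theta_u$ preserves $S$ for each $u\in S$, so $\theta_u$-orbits in $\Theta_S(u)$ embed into those in $\Theta_G(u)$, giving $\beta_G(u)\geqslant\beta_S(u)$. Coupled with $|G|\leqslant|S|\cdot|\Out(S)|$ and the (at worst quasi-logarithmic) growth of $|\Out(S)|$ in $|S|$, it suffices to prove the analogous bound with $G=S$ non-abelian simple. A useful general observation is that $\theta_u(g)=g^u g^{-1}g^{-u}g$ lies in $\langle u,g\rangle$, and $u$ normalises this subgroup; inductively, the whole $\theta_u$-orbit of $g$ is contained in $\langle u,g\rangle$. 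It follows that for every $u$-invariant subgroup $H\leqslant G$ (i.e.\ $u\in\rN_G(H)$) one has $\beta_H(u)\leqslant\beta_G(u)$, so it is enough to locate such an $H$ with $\beta_H(u)\geqslant|G|^{1/200}$.

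The bulk of the proof would be a case analysis over the families produced by the classification of finite simple groups. For a group of Lie type of rank $r$ over $\F_q$, I would pick an involution $u$ (or, in characteristic $2$, an order-$4$ element, as forced for instance in $\Sz(q)$) normalising a small $u$-invariant non-soluble subgroup $H$, typically a $(\mathrm{P})\SL_2$-subgroup arising from root subgroups in a pair of opposite roots permuted by $u$. One then analyses $\theta_u$ on $H$: choosing generic $g\in H$, one exhibits a genuinely periodic $\theta_u$-orbit, and argues that different choices of $g$ (say, indexed by a cyclic subgroup of a maximal torus of $H$) give $\gtrsim q$ distinct periodic orbits, separated by a $\theta_u$-invariant such as the characteristic polynomial of $g$ on the natural module of $G$ or the $G$-conjugacy class of $g^u g^{-1}$. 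This yields $\beta_G(u)\geqslant q^{c}$ for an explicit $c>0$; since $|G|\leqslant q^{O(r^2)}$, the desired exponent $1/200$ is obtained after aggregating bounds across rank $r$. Alternating groups $\mathrm{Alt}(n)$ are handled by taking a fixed-point-free involution $u$ and using cycle-type invariants of $g$; the sporadic groups form a finite exception list.

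The main obstacle is two-fold. First, one must exhibit sufficiently many \emph{genuinely periodic} elements rather than pre-periodic tails: $\theta_u$ has a derived-series contracting tendency that pushes generic orbits towards $1$, so constructing periodic elements requires finding $g$ whose iterates never collapse into a soluble subgroup. Second, one must argue that the constructed periodic elements fall into distinct $\theta_u$-orbits, since a single long cycle could in principle absorb many of them; this is addressed by identifying a $\theta_u$-invariant that takes many values on the constructed elements. The most delicate cases are the rank-$1$ Lie-type groups $\PSL_2(q)$, $\Sz(q)$, and ${}^2G_2(q)$, where the sparse subgroup lattice forces direct matrix-level tracking of traces (or their Suzuki/Ree analogues) under iteration of $\theta_u$. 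The universal constant $200$ in the exponent reflects a uniform upper bound across all these families and is almost certainly not optimal; it is determined by the worst case in this rank-$1$ calculation.
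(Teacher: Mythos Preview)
Your architecture matches the paper's: reduce from the almost simple $G$ to its simple socle, run a CFSG case split, handle the rank-$1$ Lie-type groups $\pL_2(q)$ and $\Sz(q)$ by direct matrix tracking of a quantity that is (quasi\nobreakdash-)periodic under $\theta_u$, and absorb the sporadic and bounded-rank exceptional groups as a finite list. The rank-$1$ analysis you sketch is essentially what the paper does, following \cite{bray2005characterization}, and you are right that this is where the constant is worst.

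The genuine gap is in the large-rank classical case. Your plan produces $\beta_G(u)\gtrsim q$ from a \emph{single} $(\mathrm{P})\SL_2$ root subgroup, but $|G|$ has order $q^{\Theta(r^2)}$; a bound $\beta_G(u)\geqslant q^{c}$ with $c$ fixed cannot deliver $|G|^{1/200}$ once $r$ is large, and the phrase ``aggregating bounds across rank $r$'' does not explain how $c$ is made to grow with $r$. The paper's device for this is a centraliser-conjugation lemma: if $L$ is a \emph{minimal} non-soluble subgroup with $u\in L$, then every non-trivial $g\in\Theta_L(u)$ satisfies $\langle g,u\rangle=L$, so distinct $\rC_G(u)$-conjugates of $L$ carry disjoint eventual $\theta_u$-orbits, and hence
\[
\beta_G(u)\ \geqslant\ \frac{|\rC_G(u)|}{|\rC_G(u)\cap\rN_G(L)|}\,\beta_L(u).
\]
For $\SL_n(q)$ one takes $u$ block-diagonal with $m\sim n/2$ (or $n/3$ when $q\leqslant3$) identical $2\times2$ (or $3\times3$) blocks and $L$ the diagonally embedded copy of $\SL_2$ (or $\SL_3$); then $|\rC_G(u)|$ has order $q^{\Theta(m^2)}$ while $|\rC_G(u)\cap\rN_G(L)|\leqslant|\rC_G(L)|\cdot|\Aut L|$ is only $|\GL_m(q)|\cdot O(1)$, so the ratio already gives $\beta_G(u)\geqslant q^{\Theta(n^2)}$. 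The remaining classical and exceptional families are then reduced to $\SL_n$ via root-subsystem embeddings. Your plan is missing this amplification step; a single $\SL_2$ does not supply it, and the ``$\theta_u$-invariant such as the characteristic polynomial of $g$'' is neither obviously $\theta_u$-invariant nor capable of taking $q^{\Theta(r^2)}$ values.
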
 

Our proofs give a smaller power than $200$ in these theorems and the power could certainly be reduced yet further at the cost of more explicit calculation.  It will also become clear that there are much smaller asymptotic bounds for simple groups.   

On the way to establishing these results we obtain the following result:

\begin{theorem}\label{atleast8} \begin{enumerate} \item[\rm(a)] Each minimal simple group $S$ has an involution $u$ with $\beta_S(u) \geqslant 8$.
\item[\rm(b)] Each finite non-soluble group $G$ has a $2$-element $v$ with $\beta_G(v) \geqslant 8$. \end{enumerate}
	\end{theorem}

We note that the bound in this result is best possible.  It is easy to calculate by hand that for $u=(1\ 2)(3\ 4)\in {\rm A}_5$ the eventual $\theta_u$-orbits are four 
of length $2$ consisting of $3$-cycles, with 
representatives $(1\ 5\ 3)^{\pm1}, (2\ 4\ 5)^{\pm1}$, and four of length $4$ consisting of $5$-cycles, with representatives $(1\ 2\ 3\ 4\ 5)^{\pm1}$ and $(5\ 3\ 4\ 2\ 1)^{\pm1}$.

Similar investigations could be carried out with the maps $\theta_u$ replaced by other maps related to characterizations of group properties.  For  example, for $G$ finite and $u\in G$, define $\varepsilon_u\colon G\to G$ by $\varepsilon_u(x)=[x,u]$.  The powers of $\varepsilon_u$ are the Engel maps $x\mapsto [x,u,\dots,u]$, and Zorn's Theorem \cite{zorn} shows that $G$ is nilpotent if and only if no $\varepsilon_u$ has non-trivial eventual orbits. 
Let $q$ be a prime power and consider the split extension of the additive group $A$ of $\F_q$ by the multiplicative group $H$.  Thus $A$ is an $H$-module and for all $a\in A$, $h\in H$ the restriction to $A$ of 
$\varepsilon_{ah}$ is the map $x\mapsto x(h-1)$.  Therefore $\varepsilon_{ah}$ has $|H\colon \langle h-1\rangle|$ eventual orbits if $h\neq1$.  If $q-1$ is a Mersenne prime then this index is $1$.  Therefore deciding whether the number of eventual orbits of maps $\varepsilon_u$ bounds the index of the Fitting subgroup of a finite group has a very different character from the proof of Theorem 1 of this paper.

\section{Preliminary results and reductions}

Although it seems hard to work directly with the maps $\theta_u$ and their orbits, they behave well with respect to subgroups, quotients and direct products.  

\begin{lemma}\label{basic}  Let $G$ be a finite group.
 \begin{enumerate} \item[\rm(a)] If $H\leqslant G$ then $\beta_H(u)\leqslant \beta_G(u)$ for each $u\in H$.
\item[\rm(b)] If $K\triangleleft G$ then $\beta_{G/K}(uK) \leqslant \beta_G(u)$ for each $u\in G$. 
\item[\rm(c)] If $Y\leqslant\rZ(G)$ then $\beta_{G/Y}(uY) = \beta_G(u)$ for each $u\in G$. 
\item[\rm(d)] Let $G_1, \dots, G_n$ be finite groups and suppose for each $i$ that $G_i$ has a $2$-element $u_i$ with $\beta_{G_i}(u_i)= \gk_i$.
		Then $D = G_1 \times \dots \times G_n$ has a $2$-element $u$ with $\beta_D(u)+1= (\gk_1+1) \dots (\gk_n+1)$.  \end{enumerate}
	\end{lemma}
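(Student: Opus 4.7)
The four assertions all rest on the observation that $\theta_u(g) = [g^{-u},g]$ is a group-theoretic word in $u$ and $g$, and so $\theta_u$ behaves functorially with respect to inclusions, quotients, and direct products.

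Part (a) is immediate: for $u\in H\leqslant G$, the restriction of $\theta_u\colon G\to G$ to $H$ coincides with the $\theta_u$-map of $H$, so $\Theta_H(u)=\Theta_G(u)\cap H$ and each $\theta_u$-orbit in $\Theta_H(u)$ is also a $\theta_u$-orbit in $\Theta_G(u)$; distinct non-trivial orbits remain distinct, giving $\beta_H(u)\leqslant\beta_G(u)$. For (b), writing $\pi\colon G\to G/K$ for the projection, one verifies $\pi\circ\theta_u=\theta_{uK}\circ\pi$, and hence $\pi(\theta_u^m(G))=\theta_{uK}^m(G/K)$ for every $m$. Taking $m$ large enough that both chains of images stabilise, $\pi$ restricts to a $\theta_u$-equivariant surjection $\Theta_G(u)\twoheadrightarrow\Theta_{G/K}(uK)$. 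Orbits map onto orbits, and the trivial orbit is preserved, so $\beta_{G/K}(uK)\leqslant\beta_G(u)$.

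Part (c) will follow from the same projection once I establish injectivity, which reduces to the key centrality identity: for $y\in Y\leqslant\rZ(G)$, expanding $[(gy)^{-u},gy]$ and using $y^u=y$ causes the central factors to cancel, yielding $\theta_u(gy)=\theta_u(g)$. Consequently, if $g_1,g_2\in\Theta_G(u)$ with $g_1Y=g_2Y$, then $\theta_u^n(g_1)=\theta_u^n(g_2)$ for every $n\geqslant 1$. Choosing a common positive period $N$ (using that both $g_i$ are periodic under $\theta_u$) gives $g_1=\theta_u^N(g_1)=\theta_u^N(g_2)=g_2$, so the surjection of (b) is also injective and orbit sets are in bijection.

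For part (d), the natural candidate is $u=(u_1,\dots,u_n)\in D$, which is evidently a $2$-element. Under the identification $\Theta_D(u)=\prod_i\Theta_{G_i}(u_i)$, the permutation $\theta_u$ decomposes as the direct product of the $\theta_{u_i}$, and an $n$-tuple of orbits $(O_{1,j_1},\dots,O_{n,j_n})$ with sizes $\ell_{i,j_i}$ contributes $\prod_i\ell_{i,j_i}/\mathrm{lcm}_i\ell_{i,j_i}$ orbits to $\Theta_D(u)$. Counting over all tuples immediately yields the lower bound $\beta_D(u)+1\geqslant(\kappa_1+1)\cdots(\kappa_n+1)$. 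The main obstacle is obtaining the matching upper bound, which requires the orbit lengths across distinct factors to be pairwise coprime so that every tuple contributes exactly one orbit; the plan is to achieve this by refining the choice of $u$, either by replacing each $u_i$ by an appropriate (possibly trivialising) power that leaves the orbits of $\theta$ on $\Theta_{G_i}$ as fixed points only, or by an induction on $n$ in which at each stage the partially built factor is arranged to have orbit lengths coprime to those of the next factor.
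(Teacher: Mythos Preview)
Your arguments for (a)--(c) are correct and more detailed than the paper's, which simply declares these assertions ``clear''. For (d), your choice $u=(u_1,\dots,u_n)$ and the factorisation $\theta_u^r=\theta_{u_1}^r\times\cdots\times\theta_{u_n}^r$ are exactly what the paper uses, and your lower bound $\beta_D(u)+1\geqslant(\kappa_1+1)\cdots(\kappa_n+1)$ is correct and is all that the paper actually proves.

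The obstacle you identify with the upper bound is genuine, because the equality stated in (d) is in fact \emph{false}. Your own formula shows why: a product $O_1\times O_2$ of orbits of lengths $\ell_1,\ell_2$ breaks into $\gcd(\ell_1,\ell_2)$ orbits of $\theta_u$, not just one. For a concrete counterexample take $G_1=G_2=A_5$ with $u_1=u_2=(1\;2)(3\;4)$; the paper records that the non-trivial $\theta_{u_i}$-orbits have lengths $2,2,2,2,4,4,4,4$, and summing $\gcd(\ell_1,\ell_2)$ over all $9\times 9$ pairs of orbits (including the trivial one) gives $\beta_D(u)+1=177$, while $(\kappa_1+1)(\kappa_2+1)=81$. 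Neither of your proposed fixes can rescue equality: replacing $u_i$ by a power changes the map $\theta_{u_i}$ itself (not merely its cycle structure on a fixed $\Theta$), so there is no reason the resulting $\beta$-value should still equal $\kappa_i$; and there is no mechanism for forcing coprime orbit lengths across independent factors. Fortunately only the inequality is ever invoked later (in the deduction of Theorem~1 from Theorem~2, to obtain $\kappa_1\cdots\kappa_n\leqslant\kappa$), so you have proved everything that is needed; the ``$=$'' in the statement should be read as ``$\geqslant$''.
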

	\begin{proof}  Assertions (a)--(c) are clear.  To prove (d),
		it suffices by induction to establish the result for $n=2$. Let $x_1, \dots, x_{d_1}$ be in distinct $\theta_{u_1}$-orbits in $G_1$ (including the trivial orbit $\{1\}$) and $y_1, \dots, y_{d_2}$ in distinct $\theta_{u_2}$-orbits in $G_2$. Since $\theta^r_{u_1u_2}(x_iy_j)=\theta^r_{u_1}(x_i)\theta^r_{u_2}(y_j)$ for each $r>0$
		the elements $x_i y_j \in G_1 \times G_2$ are in distinct $\theta_{u_1u_2}$-orbits in $G_1 \times G_2$, and if $x_i,y_j$ are not both $1$ then the orbit of $x_iy_j$ is non-trivial.
	\end{proof}
	
The following lemma and the idea in its proof will recur throughout our treatment of the finite simple groups.  

\begin{lemma} \label{generalestimate}
 Let  $G$ be a group, $L$ a minimal non-soluble subgroup and $u \in L$.  Then $$\beta_G(u)\geqslant  \frac{\lvert \rC_G(u) \rvert}{\lvert \rC_G(u)\cap\rN_G(L) \rvert}\beta_L(u)\geqslant \frac{|\rC_G(u)|}
 {|\rC_G(L)|\,|\rC_{\Aut(L)}(u)|}\beta_L(u).$$
 \end{lemma}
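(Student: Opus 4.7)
The plan is to exploit the fact that conjugation by any element $c\in\rC_G(u)$ commutes with $\theta_u$: from $cu=uc$ one obtains $(g^c)^u=(g^u)^c$, and hence $\theta_u(g^c)=\theta_u(g)^c$ for every $g\in G$. Consequently $\rC_G(u)$ permutes the $\theta_u$-orbits in $\Theta_G(u)$ by conjugation. In particular, for each $c\in\rC_G(u)$ the subgroup $L^c$ still contains $u$ (since $u=u^c$), so $\theta_u$ restricts to a permutation of $L^c$, and conjugation by $c$ sends the $\beta_L(u)$ non-trivial $\theta_u$-orbits in $L$ bijectively to the $\beta_L(u)$ non-trivial $\theta_u$-orbits in $L^c$.

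I would then count how many distinct conjugates of $L$ arise: two elements $c_1,c_2\in\rC_G(u)$ give $L^{c_1}=L^{c_2}$ iff $c_1c_2^{-1}\in\rN_G(L)\cap\rC_G(u)$, so the number of distinct conjugates is exactly $\lvert\rC_G(u)\rvert/\lvert\rC_G(u)\cap\rN_G(L)\rvert$. Call them $L_1,\dots,L_k$, each containing $u$ and each carrying $\beta_L(u)$ non-trivial $\theta_u$-orbits. The first inequality follows once I show these collections of orbits are pairwise disjoint. Suppose a non-trivial orbit $O$ were contained in $L_i\cap L_j$ with $L_i\neq L_j$. Then $L_i\cap L_j$ is a proper subgroup of the minimal non-soluble group $L_j$, and is therefore soluble. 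But $u\in L_i\cap L_j$, so for any $g\in L_i\cap L_j$ the iterates $\theta_u^n(g)$ all lie in $L_i\cap L_j$ and in fact in $(L_i\cap L_j)^{(n)}$, which is trivial for $n$ large enough. A non-trivial eventual orbit therefore cannot sit inside $L_i\cap L_j$, a contradiction. Summing over the $k$ conjugates yields $\beta_G(u)\geqslant k\,\beta_L(u)$, which is the first inequality.

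For the second inequality I would use the embedding $\rN_G(L)/\rC_G(L)\hookrightarrow\Aut(L)$. If $c\in\rC_G(u)\cap\rN_G(L)$, the automorphism of $L$ it induces fixes $u$, so the image of $\rC_G(u)\cap\rN_G(L)$ in $\Aut(L)$ lies inside $\rC_{\Aut(L)}(u)$; hence $\lvert\rC_G(u)\cap\rN_G(L)\rvert\leqslant\lvert\rC_G(L)\rvert\cdot\lvert\rC_{\Aut(L)}(u)\rvert$, and the second inequality is immediate. The only genuinely non-trivial step is the disjointness claim above; the essential inputs there are that conjugates of a minimal non-soluble group are themselves minimal non-soluble (so proper intersections are soluble), together with the easy direction of the solubility criterion, namely that $\theta_u$ admits no non-trivial eventual orbits inside any soluble subgroup containing $u$.
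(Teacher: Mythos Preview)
Your proof is correct and follows essentially the same approach as the paper's: the action of $\rC_G(u)$ on $\theta_u$-orbits by conjugation, the orbit--stabilizer count for conjugates of $L$, and the embedding of $(\rC_G(u)\cap\rN_G(L))/\rC_G(L)$ into $\rC_{\Aut(L)}(u)$ all appear identically. The only cosmetic difference is in the disjointness step: the paper phrases it as ``every non-trivial $x\in\Theta_L(u)$ satisfies $\langle x,u\rangle=L$, so orbits in distinct conjugates generate distinct conjugates'', whereas you argue the contrapositive ``a shared orbit would lie in the soluble intersection $L_i\cap L_j$ and hence be trivial''---but these are the same observation, both resting on minimality of $L$ and the easy direction of the solubility criterion.
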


\begin{proof}  Let $x$ be a non-trivial element of $\Theta_L(u)$. Then $\langle x, u \rangle = L$ and $\langle \theta^n_u(x), u \rangle = L$ for each $n\geqslant0$.  
For any $h \in C_G(u)$ observe that $(\theta_u(x))^h = \theta_u(x^h)$ and hence $(\theta_u^n(x))^h = \theta_u^n(x^h)$ for each $n \geqslant 0$ so the lengths of the eventual orbits containing $x$ and $x^h$ are the same. In particular, this means that $\rC_G(u)$ acts on $\Theta_G(u)$ by conjugation. If $x\in\Theta_L(u)$, $g \in \rC_G(u)$ and $L \neq L^g$, then $\langle x^g , u \rangle = L^g \neq L$ and so $x$ and $x^g$ do not belong to the same $\theta_u$-orbit since all elements in the same orbit generate the same conjugate of $L$. Thus each $L^g$ contains $\beta_L(u)$ orbits of $\theta_u$.  The stabilizer of $L$ in the conjugation action of $\rC_G(u)$ on the conjugates of $L$ is $\rN_{\rC_G(u)}(L)=\rC_G(u)\cap\rN_G(L)$, and $(\rC_G(u)\cap\rN_G(L))/\rC_G(L)$ is isomorphic to a subgroup of the centralizer of $u$ in $\Aut(L)$. Therefore by the orbit-stabilizer theorem,
$$\beta_G(u) \geqslant  \frac{\lvert \rC_G(u) \rvert}{\lvert \rC_G(u)\cap\rN_G(L) \rvert}\beta_L(u)  \geqslant \frac{\lvert \rC_G(u) \rvert}{\lvert \rC_G(L) \rvert \cdot \lvert \rC_{\Aut(L)}(u) \rvert} \beta_L(u)$$
as required. 
\end{proof}	

The above inequalities are in many cases sufficient to give a polynomial bound on the order of $\Aut(G)$ for a simple group $G$ in terms of the maximum value of $\beta_G(u)$, as we shall see in the next sections.  

The result below is easily verified using the computer software package GAP. 

\begin{lemma}\label{GAP} The conclusion of Theorem $\ref{atleast8}$ holds for the minimal simple groups $\pL_2(q)$ with $q \in \{7,8,13,17,23,27,32 \}$ and for $\pL_3(3)$ and $Sz(8)$.
\end{lemma}

In later sections we shall prove the following two results.

\begin{lemma} \label{PSL2}
		Let $q \geqslant 4$ be a prime power and $G = \SL_2(q)$ or $\pL_2(q)$. Assume that $\beta_G(u)\leqslant \gk$ for each $u \in G$ of order $2$ or $4$. Then $\lvert \Aut(G) \rvert \leqslant \gk^{7}$.
\end{lemma}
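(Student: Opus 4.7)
The plan is to prove the linear lower bound $\beta_G(u) \geq (q-5)/4$ for a suitable $u \in G$ of order $2$ or $4$ (this is the bound already announced in the proof of Proposition \ref{atleast8}), and then combine it with the upper bound $|\Aut(G)| \leq q^4$ from Lemma \ref{boundAut}(a). Granted the lower bound, the hypothesis $\beta_G(u)\leq \gk$ forces $q \leq 4\gk+5$; applying Proposition \ref{atleast8}(a) to a minimal simple subgroup of $G$ together with Lemma \ref{generalestimate} gives $\gk \geq 8$, so $q \leq 5\gk$. Hence $|\Aut(G)| \leq q^4 \leq 625\,\gk^4 \leq \gk^7$ whenever $\gk \geq 9$; the residual case $\gk = 8$ (so $q \leq 37$) is settled by the direct estimate $|\Aut(G)| \leq 37^4 < 8^7$ from Lemma \ref{boundAut}(a). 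The case $G = \SL_2(q)$ reduces immediately to $G = \pL_2(q)$ via Lemma \ref{basic}(c) combined with the isomorphism of the two automorphism groups recorded in Lemma \ref{boundAut}(a).

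For the linear bound itself, I take $u$ to be the image in $G$ of $s = \begin{pmatrix} 0 & 1 \\ -1 & 0 \end{pmatrix} \in \SL_2(q)$, an involution in $\pL_2(q)$ (of order $2$ or $4$ in $\SL_2(q)$ according to the parity of $q$). A short matrix calculation shows that conjugation by $u$ sends $\begin{pmatrix} a & b \\ c & d\end{pmatrix}$ to $\begin{pmatrix} d & -c \\ -b & a\end{pmatrix}$, so $u$ inverts the diagonal torus $T$ and $\theta_u$ vanishes on $T$; eventual orbits must therefore come from elements off $T$. I then exhibit roughly $(q-5)/4$ pairwise distinct non-trivial eventual orbits by parametrizing a $\theta_u$-stable family of such elements (for example translates of a fixed Borel element by $T$, or suitable products involving $u$ with a variable unipotent) by a subset of $\F_q$ and reducing the $\theta_u$-action to a single-variable rational map $f\colon \F_q \to \F_q$ of bounded degree whose cycle structure can be counted directly: each period-$\ell$ point of $f$ is a root of $f^\ell(x) - x$, a polynomial whose degree depends only on $\ell$ and $\deg f$, so after discarding the bounded exceptional locus one obtains linearly many cycles.

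The main obstacle is the second step: choosing the parametrization so that $\theta_u$ closes up into a rational map $f$ whose cycle structure is transparent, and verifying that distinct $f$-cycles on the parameter genuinely lift to distinct $\theta_u$-orbits in $G$ (so that the parametrization neither collapses orbits nor double-counts them). Once this is arranged, the cycle count and the final numerical assembly above are routine.
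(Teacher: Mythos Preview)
Your plan is exactly the paper's: the same element $u$, the same reduction of $\theta_u$ to a one-variable rational recursion on $\F_q$, and a comparable numerical assembly at the end. The ``main obstacle'' you flag is resolved in the paper (following \cite{bray2005characterization}) by tracking the scalar invariant $y=(a-d)/(b+c)$ in odd characteristic, respectively $y=(a+d)/(b+c)$ in characteristic~$2$: a direct matrix calculation shows that $\theta_u$ sends $y$ to $-y^{-1}$ (resp.\ $y^{-1}$), so $y$ has period~$2$, and counting the admissible values of $y$ realised by explicit one-parameter families of matrices gives $\beta_G(u)\geqslant(q-5)/4$ (resp.\ $(q-2)/2$). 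The paper treats the two characteristics separately rather than using the uniform bound $(q-5)/4$ throughout, but your numerics also go through.

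One logical wrinkle to fix: your appeal to Proposition~\ref{atleast8}(a) for the inequality $\kappa\geqslant 8$ is circular as written, since the proof of that proposition for the infinite family $\pL_2(q)$ quotes precisely the linear bound you are in the middle of establishing. The paper avoids this by handling $q\leqslant 31$ (odd) and $q\in\{4,8\}$ via the finitely many GAP checks referenced in Proposition~\ref{atleast8}, and deducing $\kappa\geqslant 8$ for large $q$ directly from the linear bound itself; you should do the same rather than invoke the proposition wholesale.
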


\begin{lemma}\label{suzuki}
		Let $m \in \N$ and $G =Sz(q)$ for $q=2^{2m+1}$.  If $\beta_G(u)\leqslant \kappa$ for each involution $u \in G$ then $\lvert \Aut(G) \rvert \leqslant \kappa^{12}$. 
	\end{lemma}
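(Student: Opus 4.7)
My plan is to exhibit an involution $u \in G = \Sz(q)$ with $\beta_G(u) \geqslant \frac12(q-2)$, the bound anticipated in the proof of Proposition \ref{atleast8}, and then combine this with a crude estimate for $|\Aut(G)|$. Suzuki groups contain no subgroup isomorphic to $\SL_2(q)$ or $\pL_2(q)$, so the reduction via Lemma \ref{generalestimate} that was used for the other Lie-type groups is unavailable. Instead, I would adapt the direct matrix calculation used for $\SL_2(q)$ in characteristic $2$ in the proof of Lemma 5. All involutions of $\Sz(q)$ are conjugate and lie in the centre of a Sylow $2$-subgroup $U$ of order $q^2$, so I would fix any such central involution $u$ in a standard $4 \times 4$ matrix realisation of $\Sz(q)$ inside $\SL_4(q)$.

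Using the Bruhat decomposition $\Sz(q) = B \sqcup BwB$ with $B = U \rtimes T$ the Borel of order $q^2(q-1)$, I would parameterise a one-parameter family of elements $g_\mu \in BwB$ indexed by $\mu$ ranging over a large subset of $\F_q^\times$, compute $\theta_u(g_\mu)$ by direct matrix multiplication, and exhibit a scalar invariant $y(g) \in \F_q$ in the entries of $g$ on which $\theta_u$ acts by a fixed-point-free involutive transformation (the analogue of $y \mapsto y^{-1}$ in Lemma 5). This would produce at least $\lfloor (q-2)/2 \rfloor$ distinct non-trivial eventual orbits of length $2$, and hence $\beta_G(u) \geqslant \frac12(q-2)$.

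Once this inequality is in hand, the lemma follows from crude estimates. Since $|\Sz(q)| = q^2(q^2+1)(q-1)$ and $|\Out(\Sz(q))| = 2m+1 \leqslant q/2$ for $q \geqslant 8$, we have $|\Aut(G)| \leqslant q^6$, and the desired inequality $\kappa^{12} \geqslant \bigl(\tfrac{q-2}{2}\bigr)^{12} \geqslant q^6$ reduces after taking sixth roots to $q^2 - 8q + 4 \geqslant 0$, which holds for every $q \geqslant 8$. The only remaining case $q = 8$ is already covered by the GAP verification cited in Proposition \ref{atleast8}, giving $\kappa \geqslant 8$ and hence $\kappa^{12} = 2^{36} > 87360 = |\Aut(\Sz(8))|$.

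The main obstacle is the explicit computation of $\theta_u$: the Suzuki field automorphism $\sigma$ (with $\sigma^2 = 2$) intertwines the quadratic and linear parts of the matrix parameterisation, so the natural rational invariants that work for $\pL_2(q)$ may need $\sigma$-twisted replacements, and identifying one that is permuted by $\theta_u$ as a fixed-point-free involution is the genuinely non-trivial step. The bookkeeping is heavier than in the $\pL_2(q)$ case, but the counting argument that concludes the proof should be entirely analogous.
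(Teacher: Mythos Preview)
Your plan matches the paper's approach almost exactly: fix the central involution $u=T(0,1)$ in the standard $4\times 4$ realisation, work in the big Bruhat cell, track a scalar invariant under $\theta_u$, and finish with $|\Aut(G)|\leqslant q^6\leqslant\bigl(\tfrac{q-2}{2}\bigr)^{12}$.  Two points are worth flagging.

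First, the invariant that actually works is additive, not multiplicative.  The paper conjugates $\theta_u(T(a,b)D(k)z)$ back into the slice $T(A,B)D(K)z$ and finds (after setting $a=0$, which forces $A=0$) that $B=b+1$; so the invariant is $b$ and $\theta_u$ acts on it by $b\mapsto b+1$.  Your guess of a $\sigma$-twisted rational invariant transforming as $y\mapsto y^{-1}$ is not what emerges; the characteristic-$2$ additive shift is the right picture.  The eventual orbits need not have length~$2$; what the invariant gives is that distinct pairs $\{b,b+1\}$ with $b\notin\F_2$ lie in distinct orbits.

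Second, and more substantively, exhibiting an invariant that separates orbits does not by itself produce elements of $\Theta_G(u)$: you must also know that the chosen $g_\mu$ actually lead to non-trivial eventual orbits rather than collapsing to $1$.  The paper does not prove this directly but cites \cite{bray2005characterization}, where it is shown that $T(0,b)D(1)z$ leads to a non-trivial eventual orbit for every $b\notin\F_2$.  Your sketch omits this step; without it the counting argument has nothing to count.
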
 	

From the results stated above we can now prove Theorem $\ref{atleast8}$ and show that Theorems 2 and 3 imply Theorem 1.
	
\begin{proof}[Proof of Theorem $\ref{atleast8}$]
	Every non-soluble group $G$ contains a minimal simple group $S$ as a subquotient, and each $2$-element in $S$ is the image of a $2$-element in $G$.  Thus it suffices to establish assertion (a). By Thompson's classification \cite{thompson}, all minimal simple groups are found among the groups $\pL_2(q)$ for a prime power $q$, the Suzuki groups $Sz(2^p)$ for a prime $p$ and $\pL_3(3)$.  From Lemmas \ref{PSL2} and \ref{suzuki} we have $\beta_{\pL_2(q)}(u) \geqslant \frac{1}{4}(q-5)$ and $\beta_{Sz(2^p)}(u) \geqslant \frac{1}{2}(2^p-2)$ for suitable involutions $u$. We have already noted the result for ${\rm A}_5$, which is isomorphic to $L_2(4)$ and $L_2(5)$.  The remaining groups minimal simple groups are precisely those covered by the results reported in Lemma \ref{GAP}.
\end{proof}
	 
\begin{proof}[Deduction of Theorem $1$ from Theorems $2$ and $3$] Let $\gk>0$ and $G$ be a finite  group such that $\beta_G(u)\leqslant \gk$ for each $2$-element $u\in G$.   Since every $2$-element in $G/\rR(G)$ is an image of a $2$-element in $G$, this quotient inherits the hypothesis on $G$, and so it suffices to prove the result for $G/\rR(G)$.  Thus we may assume that $\rR(G)=1$; then the socle of $G$ is a direct product $S_1\times\cdots\times S_n$ of non-abelian finite simple groups, and $G$ permutes these groups by conjugation, with kernel $K$, say.   Write $\gk_i$ for the maximum value of $\beta_{S_i}(u_i)$ over all $2$-elements in $S_i$ for each $i$.  By Lemma \ref{basic} we have $\prod(\gk_i+1)\leqslant \gk+1$.  Since $K/\rC(S_i)$ is isomorphic to a subgroup of $\Aut(S_i)$ we have $|K/\rC(S_i)|\leqslant \gk_i^{200}$ for each $i$ from Theorem 2, and hence $|K|\leqslant \gk^{200}$ since $\bigcap\rC(S_i)=\rC(S_1\times\cdots\times S_n)=1$.  From Theorem \ref{atleast8} we have $\gk_i\geqslant8$ for each $i$ and so $9^n\leqslant \gk+1$. Hence $8^n \leqslant \gk$.
	 Since $G/K$ permutes the groups $S_i$ faithfully by conjugation we have $|G/K|\leqslant n!\leqslant n^{n/2}$ and so $$\log|G/K|\leqslant{\textstyle\frac12}\, n \log n \leqslant{\textstyle\frac12}(\log\gk)(\log\log\gk),$$ with logarithms to base $8$,
	 and $|G/K|\leqslant \gk^{(1/2) \log\log \gk}$.  Therefore $|G|\leqslant \gk^{200 +\log\log \gk}$, and the assertion of the theorem follows. \end{proof}

Therefore it remains to prove Theorem 2 and Lemmas \ref{PSL2} and \ref{suzuki}.  We use the classification of the finite simple groups, which asserts that every non-abelian finite simple group is isomorphic to one of the following:
\begin{enumerate}
	\item[] an alternating group ${\rm A}_n$ for $n \geqslant 5$;
	\item[] a simple group of Lie type in one of the families below:
	\begin{itemize}
		\item Classical Chevalley groups: $A_n(q)$, $B_n(q)$, $C_n(q)$, $D_n(q)$;
		\item Exceptional Chevalley groups: $E_6(q)$, $E_7(q)$, $E_8(q)$, $F_4(q)$, $G_2(q)$;
		\item Steinberg groups: $^2A_n(q^2)$, $^2D_n(q^2)$, $^2E_6(q^2)$, $^3D_4(q^3)$;
		\item Suzuki--Ree groups: $^2B_2(q)$, $^2F_4(q)$, $^2G_2(q)$;
	\end{itemize}
	\item[] one of the $27$ sporadic groups (including the Tits group).
\end{enumerate}
We handle the groups listed in the various classes above in the following sections of the paper.

\section{Alternating Groups}
\begin{lemma}\label{Alternating}
	Let $G = {\rm A}_n$ be the alternating group of degree $n\geqslant5$.   If $\beta_G(u)\leqslant \gk$ for each involution $u\in G$ then $\lvert \Aut(G) \rvert \leqslant \gk^4$.
\end{lemma}
	
\begin{proof}
First, as already noted, for $v=(1\ 2)(3\ 4)\in {\rm A}_5$ we have $\beta_{{\rm A}_5}(v)=8$.
For $6\leqslant n\leqslant9$, the element $v$ lies in the alternating group on $\{1,2,3,4,s\}$ for $s\in\{5,\dots, n\}$ and so since these are distinct minimal simple subgroups 
the idea in the proof of Lemma \ref{generalestimate} shows that $\beta_{{\rm A}_n}(v) \geqslant 8(n-4)$, and it is easy to check that $(8(n-4))^4 \geqslant 2(n!)$ for $n\in\{6,7,8,9\}$.  

Suppose that $n\geqslant10$. Write $n=5m+r$ with $m\geqslant2$ and $0\leqslant r\leqslant 4$. We choose an action of ${\rm A}_5$ on $\{1,\dots,n\}$ with orbits $\{5k+1,\dots,5k+5\}$ for $k=0,\dots,m-1$ and $\{s\}$ for $s>5m$, and with $v\in {\rm A}_5$ acting
as the involution $$u=\prod_{k=0}^{m-1} \big((5k+1) \ (5k+2)\big)(\big(5k+3) \ (5k+4)\big).$$  Let $L$ be the image of ${\rm A}_5$ in this action.
	
The conjugacy class of $u$ in ${\rm A}_n$ for $m \geqslant 2$ contains all elements of the same cycle type since $u$ is centralized by the odd permutation $(5 \ 10)$.
Hence
	$$ \lvert \rC_{{\rm A}_n}(u) \rvert = \frac{n!/2}{ \binom{n}{2} \binom{n-2}{2} \cdots \binom{n-4m+2}{2}/(2m)! }= (n-4m)! \ (2m)! \ 2^{2m-1}. $$

Since $\rC_{{\rm A}_n}(L)$ maps $L$-orbits to $L$-orbits, we
have a  homomorphism  $\rC_{{\rm A}_n}(L) \to {\rm S}_m$; since $\rC_{{\rm S}_5}({\rm A}_5)$ is trivial, the kernel fixes each $k\leqslant 5m$ and so has order dividing $r!$.  In particular, $\lvert \rC_{{\rm A}_n}(L) \rvert \leqslant (m!)(r!)$ holds.  Moreover, $\rC_{\Aut (L)}(u)\cong \rC_{{\rm S}_5}((1\ 2)(3\ 4))$ which has order $8$.  Thus, by Lemma 2, 
$$\beta_{{\rm A}_n}(u)\geqslant \frac{|\rC_{{\rm A}_n}(u)|}
 {|\rC_{{\rm A}_n}(L)|\,|\rC_{\Aut(L)}(u)|}\beta_L(u)\geqslant \frac{(n-4m)! \cdot(2m)! \cdot 2^{2m-1}}{m!\cdot r! \cdot 8}\cdot 8,
$$ and so $$\beta_{{\rm A}_n}(u) \geqslant \begin{pmatrix} m+r\cr r\end{pmatrix} (2m)! \cdot2^{2m-1}.\eqno(1)$$ 
If $m=2$ then (1) gives
$$\beta_{{\rm A}_n}(u)\geqslant \begin{pmatrix} 2+r\cr 2\end{pmatrix} 4!\cdot2^3$$ and direct calculation from this shows that $\beta_{{\rm A}_n}(u)^4\geqslant n!$ for $10\leqslant n\leqslant 14$.

If $m\geqslant3$, from (1) we have
$$3^{6m} \beta_{{\rm A}_n}(u)^3\geqslant \bigg(\prod_{k=0}^{2m-1} (6m-3k)(6m-3k-1)(6m-3k-2)\bigg) \, 2^{6m-3} \geqslant (6m)! \,2^{6m-3}$$
and so $$\frac{\beta_{{\rm A}_n}(u)^4}{n!}\geqslant \bigg(\frac{2^8}{3^6}\bigg)^m\frac1{16} \cdot (2m)!\cdot\frac{(6m)!}{n!}\geqslant \bigg(\frac13\bigg)^m\frac1{16} \cdot (2m)!\cdot\frac{(6m)!}{n!}$$
It is easy to check that the right-hand side above is greater than $1$ when $6m\geqslant n$ and to check from (1) that the left-hand side is greater than $1$ if $n=19$.
\end{proof}

	\section{Groups of type $\SL_n(q)$ and $\pL_n(q)$}
	The treatment of finite simple groups of Lie type is more involved.  In this section we begin by proving the Lemma \ref{PSL2} (stated in Section 2); then we use the information that it provides about the groups $\SL_2(q)$ and $\pL_2(q)$ to settle the case of groups $\SL_n(q)$ and $\pL_n(q)$ with $n\geqslant3$.  We require the following information about their automorphism groups: it follows directly from results due essentially to Dieudonn\'e \cite{dieudonne}.  For (b), see also p.\ xvi in the Atlas \cite{atlas}.  
	
	\begin{lemma}\label{boundAut}
		Let $q = p^f$ be a prime power and suppose that either $n\geqslant3$ or $n= 2$ and $q\geqslant4$.   Then
		\begin{enumerate} 
		\item[\rm(a)]  The automorphism groups of $\SL_n(q)$ and $\pL_n(q)$ are isomorphic and have order at most $q^{n^2}$. 
		\item[\rm(b)] The outer automorphism group $\Out (\pL_n(q))$ of $\pL_n(q)$ satisfies
		$$\lvert \Out (\pL_n(q)) \rvert =  \begin{cases} \gcd(q-1,n) \cdot f & \hbox{for  } n=2 \\ \gcd(q-1,n) \cdot 2f \quad&  \hbox{for  }n \geqslant 3. \end{cases}$$
		 \end{enumerate}
	\end{lemma}
		
	
	\begin{proof}[Proof of Lemma $\ref{PSL2}$]   By Lemmas \ref{basic}(c) and \ref{boundAut} it will suffice to prove the result for $G=\SL_2(q)$.
		We draw heavily on the ideas and the notation in \cite{bray2005characterization}. In particular we choose
		$$ u = \begin{pmatrix}
		0 & 1 \\
		-1 & 0
		\end{pmatrix}, $$ and for  $$
		w = \begin{pmatrix}
		a & b \\ c & d
		\end{pmatrix} \in \SL_2(q) \quad \text{ we write } \quad \theta_u(w) = \begin{pmatrix}
		A & B \\ C & D
		\end{pmatrix}.$$
		There are two cases depending on the characteristic of the underlying field.
		
\medskip	\noindent {\em Case} 1.   Suppose that $\char\, \F_q = 2$. 

We claim that $|\Aut(G)|\leqslant\kappa^6$.   This holds for $q=4$ by Lemma \ref{Alternating} since
$\SL_2(4)\cong {\rm A}_5$.  By Lemma \ref{GAP}, if $q=8$ we have $\gk^4 \geqslant 8^4 \geqslant |\Aut(\SL_2(8))|$.  Suppose now that $q\geqslant16$.  
		
		Theorem 2.1 in \cite{bray2005characterization} shows that $0 \not\in \{ a+d, b+c, a+b+c+d \}$ if and only if $0 \not\in \{ A+D, B+C, A+B+C+D \}$. In particular, this is the case for any element
		$$w = \begin{pmatrix}
		1 & 1 \\
		\mu & \mu+1
		\end{pmatrix}$$
		with $\mu \in \F_q \setminus \F_2$.
		Consider the quantity $y=\frac{a+d}{b+c}$ and let $Y= \frac{A+D}{B+C}$. An easy calculation shows that $Y = y^{-1}$, so when we apply powers of the map $\theta_u$
		the quantity $y$ has cycles of length $2$. For the elements $w$ above, it takes the value $\frac{\mu}{\mu+1}=1 - \frac{1}{\mu+1}$, which can be any element of $\F_q \setminus \F_2$. So $\beta_G(u) \geqslant \frac12(q-2)$.
		It is easy to check that $(\frac12(q-2))^6\geqslant q^4$ for $q\geqslant16$, and the claim follows from Lemma \ref{boundAut}.

\medskip \noindent{\em Case} 2.  Suppose that $\char\, \F_q = p \geqslant 3$.
		
		Let $Q = \{\mu^2 \ | \ \mu \in \F_q^\times \}$ and $N = \F_q^\times \setminus Q$.  Theorem 2.2 in \cite{bray2005characterization} shows that any element satisfying $a-d \neq 0$, $b+c \neq 0$ and $-2((a-d)^2+(b+c)^2) \in N$ leads to a (non-trivial) eventual orbit of $\theta_u$. In particular, we consider
		$$w = \begin{pmatrix}
		\mu & 0 \\
		\lambda (\mu - \mu^{-1}) & \mu^{-1}
		\end{pmatrix}$$
		with $\mu^2 \notin \{0, 1 \}$ and $\lambda \neq 0$. The outstanding condition for $\lambda$ is now $-2(\lambda^2+1) \in N$ and we shall now show that there are many suitable choices for $\lambda$. If $-2$ is a square root in $\F_q$, we are interested in the number of non-zero $\lambda$ such that $\lambda^2+1 \in Q$. If $-2$ is \emph{not} a square root in $\F_q$, we are interested in the number of non-zero $\lambda$ such that $\lambda^2+1 \in N$. We shall now count the number of suitable $\lambda$ in both of these two cases.
		
		First suppose that $\lambda^2+1 \in Q$, so $\lambda^2+1=\nu^2$ for some $\nu \in \F_q^\times$; then $(\lambda - \nu)(\lambda + \nu)=-1$.  Let $t := \lambda - \nu \in \F_q^{\times}$; then $\lambda+\nu = -t^{-1}$. Therefore $\lambda = \lambda(t) = \frac12(t-t^{-1})$. If $\lambda(t_1) = \lambda(t_2)$, then $t_2=t_1$ or $t_2 = -t_1^{-1}$ and $\lambda(t)=0$ only for $t= \pm 1$. So in total there are $q-3$ values for $t$ such that $\lambda \neq 0$ and $\lambda^2+1\in Q\cap\{0\}$. Therefore the number of non-zero values of $\lambda$ for which $\lambda^2+1\in Q$ is $\frac12(q-5)$ if $-1\in Q$ and $\frac12(q-3)$ otherwise.  This also means that the number of non-zero values of $\lambda$ with $\lambda^2+1\in N$ is $\frac12(q+3)$ if $-1\in Q$ and $\frac12(q+1)$ otherwise.  Therefore, in all cases we have at least $\frac12(q-5)$ suitable choices for $\lambda$.
		
		Finally, consider the quantity $y=\frac{a-d}{b+c}$ and let $Y= \frac{A-D}{B+C}$. Following a calculation in \cite{bray2005characterization} we have $Y = -y^{-1}$, so the quantity $y$ has cycles of length $2$. For the elements $w$ and $\theta_u^{2n}(w)$ with $n>0$ it takes the value $\lambda^{-1}$ and so $w$ leads to an eventual orbit on which its values are $\lambda^{-1}$ and $-\lambda$. Therefore $\kappa\geqslant \beta_G(u)\geqslant \frac14(q-5)$. 
		For $q\geqslant 37$, we have $( \frac14(q-5))^7\geqslant q^4$ and the conclusion follows from Lemma \ref{boundAut}.  For $q \leqslant 31$,  Lemma \ref{GAP} gives $\kappa^{7} \geqslant 8^{7} =2^{21}> 32^4 > q^4 \geqslant \lvert \Aut(G) \rvert$.
	\end{proof}

	Next we consider all groups  $\SL_n(q)$ and $\pL_n(q)$. The following lemma simplifies the calculation of centralizer orders. 
	\begin{lemma}\label{centralizerofL}  
	\begin{enumerate} \item[\rm(a)] Let $L$ be a non-abelian group, $F$ a finite field and $M$ a simple $FL$-module of prime degree $\ell$ over $F$ on which $L$ acts faithfully. Then $\End_{FL}(M)$ consists of scalar multiplications by elements of $F$.
	
	\item[\rm(b)] Suppose in addition that $L \leqslant G$ and $N$ is an $FG$-module on which $G$ acts faithfully. If $N$ is isomorphic to $M^m$ regarded as an $FL$-module then $\lvert \rC_{G}(L) \rvert \leqslant |\GL_m(F)|$. 
	\end{enumerate} \end{lemma}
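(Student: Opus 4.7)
The plan is to deduce (a) from Schur's lemma together with Wedderburn's theorem on finite division rings, using the primality of $\ell$ in an essential way, and then to deduce (b) by viewing elements of $\rC_G(L)$ as $FL$-module endomorphisms of $N$ and applying (a) to identify the endomorphism ring.

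For part (a), I would set $E=\End_{FL}(M)$. Since $M$ is simple, Schur's lemma says $E$ is a division ring, and finiteness of $F$ (and hence of $E$) forces $E$ to be a finite field by Wedderburn's theorem. The field $F$ embeds in $E$ via scalar multiplications, so $E$ is a field extension of $F$. The key point is now to regard $M$ as a vector space over $E$: the $L$-action is $E$-linear by definition of $E$, and we have the identity $\dim_F M = [E:F]\cdot\dim_E M$. Since $\dim_F M = \ell$ is prime, $[E:F]$ is either $1$ or $\ell$.

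The case $[E:F]=\ell$ would give $\dim_E M = 1$, so the $L$-action on $M$ would be given by scalar multiplication by elements of $E$; in particular $L$ would act by a homomorphism into the abelian group $E^\times$, contradicting the assumption that $L$ is non-abelian and acts faithfully. Hence $[E:F]=1$, i.e.\ $E = F$, which proves (a).

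For part (b), take $h\in\rC_G(L)$. The action of $h$ on $N$ is an $F$-linear map commuting with the $L$-action, so it lies in $\End_{FL}(N)^\times$. Since $N \cong M^m$ as an $FL$-module, standard decomposition gives $\End_{FL}(N)\cong M_m(\End_{FL}(M))$, and by (a) this is $M_m(F)$; the group of units is $\GL_m(F)$. The faithfulness of the $G$-action on $N$ makes the map $\rC_G(L)\to \GL_m(F)$ thus obtained injective, giving $\lvert \rC_G(L)\rvert \leqslant \lvert\GL_m(F)\rvert$. The only delicate step is the dichotomy in (a); the rest is bookkeeping.
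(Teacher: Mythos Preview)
Your proof is correct and follows essentially the same route as the paper's: for (a) both invoke Schur's lemma, view $M$ over the field $E$, and use that $L$ non-abelian forces $\dim_E M>1$, hence $[E:F]=1$ since $\ell$ is prime; for (b) both identify $\End_{FL}(M^m)\cong\mathrm{Mat}_m(F)$ and embed $\rC_G(L)$ in its unit group. The only difference is cosmetic: you make Wedderburn's theorem explicit, whereas the paper folds it into the phrase ``is a field by Schur's Lemma''.
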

	\begin{proof}
		(a) The subring $E=\End_{FL}(M)$ of $\End_{F}M$ is a field by Schur's Lemma and it contains the field $F_1$ of scalar multiplications by elements of $F$.
		Moreover $M$ is an $EL$-module, with $\dim_EM=(\dim_{F_1}M)/[E\colon F_1]$.  Since $L$ is non-abelian we have $\dim_EM>1$, and the result follows.
	
		(b) By (a) we have $$\Hom_{FL}(N,N) = \Hom_{FL}(M^m,M^m)\cong \mathrm{Mat}_m(F).$$ 
	Since $G$ embeds in the group of units of the algebra on the left-hand side the result follows. 
			\end{proof}

	\begin{lemma} \label{mainlemma}
		Let $q$ be a prime power, $n \geqslant 3$ and $G = \SL_n(q)$ or $\pL_n(q)$. Assume that $\beta_G(u)\leqslant \kappa$ for each $2$-element $u \in G$. Then $\lvert \Aut(G) \rvert \leqslant \kappa^{22}$. 
	\end{lemma}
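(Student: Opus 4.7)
The plan is to exploit a diagonal embedding of $L = \SL_2(q)$ into $G$ and apply Lemma \ref{generalestimate}, chaining with the bound from Lemma 5. By Lemma \ref{boundAut}(a) it suffices to bound $|\Aut(\SL_n(q))|$. First, for $q \geqslant 4$ embed $L = \SL_2(q)$ as a block subgroup of $G$; every 2-element of $L$ is a 2-element of $G$, so Lemma \ref{basic}(a) gives $\beta_L(v) \leqslant \kappa$ for each 2-element $v \in L$. Lemma 5 then forces $|\Aut(L)| \leqslant \kappa^7$, and since $|\Aut(L)| \geqslant |\pL_2(q)| \geqslant q^3/4$, we obtain $q \leqslant c\kappa^{7/3}$ for an absolute constant $c$. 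This handles $q$; what remains is to bound $n$.

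To control $n$, embed $L$ diagonally: writing $n = 2m+r$ with $r \in \{0,1\}$, send $A \mapsto \mathrm{diag}(A,\ldots,A,I_r)$ with $m$ copies of $A$, and take $u \in L$ to be the standard 2-element used in the proof of Lemma 5. The natural $G$-module decomposes under $L$ as $M^m \oplus F^r$, where $M$ is the 2-dimensional natural $L$-module (of prime degree $2$, so Lemma \ref{centralizerofL} applies) and $F$ is the trivial module; this bounds $|\rC_G(L)| \lesssim |\GL_m(q)| \cdot |\GL_r(q)| \sim q^{m^2+r^2}$. Meanwhile $u$ is a semisimple 2-element whose eigenvalues are $\pm i$ (or $\pm 1$ in characteristic $2$) of multiplicity $m$, and $1$ of multiplicity $r$; Galois descent identifies $\rC_G(u)$ with an index-bounded subgroup of $\GL_m(q^2) \times \GL_r(q)$, of order $\sim q^{2m^2+r^2}$. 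Hence $|\rC_G(u)|/|\rC_G(L)|$ is at least of order $q^{m^2}$.

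Applying Lemma \ref{generalestimate} with the lower bound $\beta_L(u) \geqslant (q-5)/4$ from the proof of Lemma 5 and the sharp estimate $|\rC_{\Aut(L)}(u)| \lesssim q\log q$ (the centralizer of a semisimple element of $\PGL_2(q)$ is a torus of order $q \pm 1$, and field automorphisms contribute at most $f = \log_p q$), one obtains
$$\kappa \geqslant \beta_G(u) \gtrsim \frac{q^{m^2}}{q\log q}\cdot(q-5) \gtrsim \frac{q^{m^2}}{\log q},$$
so $m^2 \log q \leqslant c_1 \log\kappa$ for an absolute constant $c_1$. An AM--GM estimate $m \log q \leqslant \tfrac12(m^2 \log q + \log q)$ combined with $\log q \leqslant (7/3)\log\kappa$ from the first paragraph then controls $n^2 \log q = (4m^2 + O(m)) \log q$ by $O(\log\kappa)$, and $|\Aut(G)| \leqslant q^{n^2} \leqslant \kappa^{22}$ follows after optimization of the constants.

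The main obstacle is achieving the precise exponent $22$: crude versions of the estimates above give something closer to $\kappa^{32}$, so reaching $\kappa^{22}$ requires the sharper bound on $|\rC_{\Aut(L)}(u)|$ in place of the blunt $\kappa^7$, together with careful accounting of the determinant conditions relating $\GL_n$ and $\SL_n$ (and the two cases $r = 0$ versus $r = 1$). Exceptional small cases also need separate treatment: for $q \in \{2,3\}$, where $\SL_2(q)$ is soluble and the embedding trick fails, one can embed a non-soluble subgroup $\SL_2(q^k)$ for some $k \geqslant 2$ or invoke $\SL_3(q) \leqslant G$; and for $n = 3$, where the diagonal embedding with $m = 1$ yields only a ratio of order $q$, the argument must be carried out directly using the sharpened centralizer bound.
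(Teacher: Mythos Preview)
Your overall plan---a diagonal embedding of an $\SL_2$-type group followed by Lemma~\ref{generalestimate}---is precisely the paper's strategy for $q\geqslant 4$ (its Case~3), and the suggestion to use $\SL_3(q)$ when $q\in\{2,3\}$ is what the paper does in Cases~1 and~2. The centralizer estimates you sketch are also essentially those in the paper.

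There is, however, a genuine gap in your appeal to Lemma~\ref{generalestimate}. That lemma requires $L$ to be a \emph{minimal non-soluble} subgroup: its proof uses that every non-trivial $x\in\Theta_L(u)$ satisfies $\langle x,u\rangle=L$, and this is what guarantees that distinct conjugates $L^g$ contribute disjoint $\theta_u$-orbits. For most $q$ your $L=\SL_2(q)$ is \emph{not} minimal non-soluble---for instance $\SL_2(p)$ is a proper non-soluble subgroup of $\SL_2(p^f)$ whenever $f>1$ and $p\geqslant 5$, and $2\cdot A_5$ is a proper non-soluble subgroup of $\SL_2(p)$ when $p\equiv\pm1\pmod 5$---so the lemma does not apply as stated, and you cannot legitimately carry the factor $\beta_L(u)\geqslant(q-5)/4$ through it. The paper resolves this by replacing the diagonal $\SL_2(q)$ with a genuinely minimal non-soluble subgroup $L$ sitting inside it (one of $\SL_2(p^r)$ for a prime $r\mid f$, $\SL_2(p)$, or $2\cdot A_5$, according to the arithmetic of $q$); it then uses only $\beta_L(u)\geqslant 1$ together with the crude bound $|\rC_{\Aut(L)}(u)|\leqslant|\Aut(L)|\leqslant q^4$, obtaining $\kappa\geqslant(q-1)^{m-1}q^{m^2-m-4}$ and hence exponent~$15$ for $m\geqslant 3$. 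The residual cases $m\in\{1,2\}$ are handled separately by locating $\SL_2(q)$ or $\SL_2(q)\times\SL_2(q)$ inside $G$ and quoting Lemma~5 directly, and it is the case $m=2$ that produces the final exponent~$22$. To make your version work you would need either to verify that the specific elements exhibited in the proof of Lemma~5 actually generate all of $\SL_2(q)$ together with $u$, or to prove a variant of Lemma~\ref{generalestimate} valid for non-minimal~$L$.
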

	\begin{proof} By Lemmas \ref{basic}(c) and \ref{boundAut} it suffices to prove the result for the groups $G = \SL_n(q)$.
		We distinguish between three cases depending on $q$. \medskip
		
		\noindent{\em Case} 1.  Suppose that $q=2$.
		
		Assume first that $n=3m$ for some $m \in \N$ and let
		$$u={\rm diag}\left(\begin{pmatrix}
		1 & 0 & 0\\
		0 & 1 & 1\\
		0 & 0 & 1
		\end{pmatrix}, \dots, 
		\begin{pmatrix}
		1 & 0 & 0\\
		0 & 1 & 1\\
		0 & 0 & 1
		\end{pmatrix} \right)$$
		and $L = \left\lbrace \text{diag}(a, \dots, a) \mid a \in \SL_3(2) \right\rbrace$; thus $L\cong \SL_3(2)$. Observe that $u$ is similar to the block matrix
		$$M=\begin{pmatrix}
		I_m & 0 & 0\\
		0 & I_m & I_m\\
		0 & 0 & I_m
		\end{pmatrix}$$
		where $I_m$ and $0$ denote the $m \times m$ identity and zero matrices. We have
		$$\rC_{\SL_{3m}(2)}(M)=\left\lbrace \begin{pmatrix}
		A & 0 & C\\
		D & B & E\\
		0 & 0 & B
		\end{pmatrix} \mid A, B \in \SL_m(2), C, D, E \in {\rm Mat}_m(2) \right\rbrace$$
		and it follows that $\lvert \rC_{\SL_{3m}(2)}(u) \rvert = \lvert \rC_{\SL_{3m}(2)}(M) \rvert = 2^{3m^2} \lvert \SL_{m}(2) \lvert ^2$.
		We also have $|\SL_m(2)|=\prod_{k=0}^{m-1}(2^m-2^k)\geqslant 2^{(m-1)m}$. 
				
		Since the natural module for $\SL_3(2)$ is simple, by Lemma \ref{centralizerofL} we have $\lvert \rC_{\SL_{3m}(2)}(L) \rvert \leqslant |\GL_m(2)|$.
		From Lemma \ref{boundAut} we have $|\Out(\SL_3(2))|=2$ and so $\lvert \Aut(\SL_3(2)) \rvert=336$. By Lemma \ref{generalestimate}, we obtain the estimate
		$$ \beta_G(u) \geqslant \frac{\lvert \rC_{\SL_{3m}(2)}(u)\rvert}{\lvert \rC_{\SL_{3m}(2)}(L) \rvert \cdot \lvert \Aut(\SL_{3}(2)) \rvert} \cdot 8 \geqslant\frac{2^{3m^2} \lvert \SL_m(2) \rvert}{42}  \geqslant 2^{4m^2-m-6}.$$
		Now for $G = \SL_n(2)$ where $n = 3m + r$ with $r \in \{0, 1, 2\}$, we can embed $\SL_{3m}(2)$ in $\SL_n(2)$. Then $\lvert \Aut(G) \rvert \leqslant 2^{n^2} \leqslant  2^{9m^2+12m+4}$.  Since we have $8(4m^2-m-6)\geqslant 9m^2+12m+4$ for $m\geqslant2$ it follows that $\gk^8\geqslant |\Aut(G)|$ if $m\geqslant2$.  
		 For $m=1$ we have $\beta_G(u) \geqslant 8$ by Lemma \ref{GAP} because $\SL_3(2) \cong \pL_2(7)$. Therefore $\gk^9 \geqslant 8^9 =2^{27} \geqslant \lvert \Aut(G) \rvert$.
		 		
		\medskip \noindent {\em Case} 2.  Suppose that $q=3$.
		
		Assume first that $n=3m$ for some $m \in \N$ and let
		$$u = {\rm diag}\left( \begin{pmatrix}
		1 & 0 & 0 \\ 0 & 2 & 0 \\ 0 & 0 & 2
		\end{pmatrix}, \dots, \begin{pmatrix}
		1 & 0 & 0 \\ 0 & 2 & 0 \\ 0 & 0 & 2
		\end{pmatrix} \right)$$
		and $L = \left\lbrace \text{diag}(a, \dots, a) \mid a \in \SL_3(3) \right\rbrace$; thus $L \cong \SL_3(3)$.
		
		Then $\rC_{\GL_{3m}(3)}(u) \cong \GL_{2m}(3) \times \GL_{m}(3)$. Intersecting with $\SL_{3m}(3)$ we obtain that $\lvert \rC_{\SL_{3m}(3)}(u) \rvert = \frac{1}{2}\lvert\GL_{2m}(3)\rvert\cdot\lvert\GL_{m}(3)\rvert$. 
		By Lemma \ref{centralizerofL} we have $\lvert \rC_{\SL_{3m}(3)}(L) \rvert \leqslant |\GL_m(3)|$.
				Now $|\pL_3(3)|= 2^4\cdot3^3\cdot13$ and so by Lemma 4 we have $\lvert \Aut(\SL_{3}(3)) \rvert = 2^5\cdot3^3\cdot13$. Therefore by Lemma \ref{generalestimate} we obtain 
		$$ \beta_G(u) \geqslant \frac{\frac12|\GL_{2m}(3)|\cdot |\GL_{m}(3)| }{\lvert \GL_{m}(3)\rvert \cdot (2^5\cdot3^3\cdot13) } \cdot 8 \geqslant \frac{\prod_{k=1}^{2m}(3^{2m}-3^{2m-1})}{2^3\cdot3^3\cdot13} = \frac1{13}\cdot 3^{2m(2m-1)-3}\ 2^{2m-2}.$$
		Since $26<3^3<2^5$ we obtain $\log_3\beta_G(u)\geqslant 2m(m-1)-6+\frac35(2m-2)$.
		
		For a general $G = \SL_n(3)$ where $n = 3m + r$ for $r \in \{0, 1, 2\}$, we can embed $\SL_{3m}(3)$ in $\SL_n(3)$. Then $\lvert \Aut(G) \rvert \leqslant 3^{n^2} \leqslant  3^{9m^2+12m+4}$.  Since $7(2m(m-1)-6+\frac35(2m-2))\geqslant 9m^2+12m+4$ for $m\geqslant2$ we conclude that $\gk^7 \geqslant |\Aut(G)|$ for $m\geqslant2$.  
		  
		For $m = 1$ we have $\beta_G(u) \geqslant 8$  by Lemma \ref{GAP} and so $\kappa^{14} \geqslant 8^{14} =2^{42}>(2^8)^5> (3^5)^5 \geqslant \lvert \Aut(G) \rvert$.

		\medskip\noindent {\em Case} 3.  Suppose that  $q=p^f$ for $p\geqslant5$ or $p\in\{2,3\}$ and $f \geqslant 2$.
		
		Assume first that $n=2m$ for some $m \in \N$ and let
		$$u = {\rm diag}\left( \begin{pmatrix}
		0 & 1 \\ -1 & 0
		\end{pmatrix}, \dots, \begin{pmatrix}
		0 & 1 \\ -1 & 0
		\end{pmatrix} \right).$$		
		We split the calculation of $|\rC_G(u)|$ into a number of cases:			
		\begin{enumerate}
			
			\item [(i)] $p=2$: $\begin{pmatrix} 0 & 1 \\ 1 & 0 \end{pmatrix}$ is similar to $\begin{pmatrix} 1 & 1 \\ 0 & 1 \end{pmatrix}$ so $u$ is similar to $M=\begin{pmatrix}I_m & I_m \\ 0 & I_m \end{pmatrix}$ over $\F_q$. Since 
			$$\rC_{\GL_{2m}(q)}(M) = \left\lbrace \begin{pmatrix} A & B \\ 0 & A\end{pmatrix} \mid A \in \GL_{m}(q), \ B \in \text{Mat}_m(q) \right\rbrace$$
			we have $\lvert \rC_{\GL_{2m}(q)}(u) \rvert = q^{m^2}\lvert \GL_{m}(q) \rvert$.
			
			\item [(ii)] $p \geqslant 3$ and $\F_q$ has an element $i$ with $i^2=-1$: then $u$ is diagonalizable over $\F_q$ with eigenvalues $i$ and $-i$ each of multiplicity $m$ and so $\rC_{\GL_{2m}(q)}(u) \cong \GL_{m}(q) \times \GL_{m}(q)$.
			
			\item [(iii)] $p \geqslant 3$ and $\F_q$ has no element $i$ with $i^2=-1$:
			then $\rC_{\GL_{2m}(q)}(u) \cong \GL_{m}(q^2)$.
		\end{enumerate}
		The intersection of the centralizer with $\SL_{2m}(q)$ has index at most $q-1$, and so in each case above we have 
		$$\lvert \rC_{\SL_{2m}(q)}(u) \rvert \geqslant \frac{1}{q-1} \lvert \GL_{m}(q) \rvert^2 \geqslant  |\GL_m(q)| (q-1)^{m-1}q^{m^2-m}.$$

Next we define a subgroup $L$ containing $u$.  Again there are various cases: in each, we choose $L$ to be either minimal simple or a double cover of a minimal simple group and hence minimal non-soluble. 
		\begin{enumerate}
			\item [(i)] $p=2$.
			
			Let $L = \left\lbrace \text{diag}(a, \dots, a) \mid a \in \SL_2(2^r) \right\rbrace \cong \SL_2(2^r)$ where $r$ is some prime divisor of $f$.
			
			\item [(ii)] $p=3$ and $f$ has an odd prime divisor $r$.
			
			Let $L = \left\lbrace \text{diag}(a, \dots, a) \mid a \in \SL_2(3^r) \right\rbrace \cong \SL_2(3^r)$.
			
			\item [(iii)] $p=3$ and $f$ is a power of $2$.
			
			Consider the subgroup $H \leqslant \left\lbrace \text{diag}(a, \dots, a) \mid a \in \SL_2(9) \right\rbrace \cong \SL_2(9) \cong 2 \cdot A_6$ and let $L \leqslant H$ be any subgroup of $H$ isomorphic to $2 \cdot A_5$ such that $u \in L$. Such subgroups exist since there is a single conjugacy class of elements of order $4$ in $\SL_2(9)$. 
			
			\item [(iv)] $p \geqslant 5$ with $p \equiv \pm 2 \pmod 5$ or $p=5$.
			
			Let $L = \left\lbrace \text{diag}(a, \dots, a) \mid a \in \SL_2(p) \right\rbrace \cong \SL_2(p)$.

			\item [(v)] $p \geqslant 5$ with $p \equiv \pm 1 \pmod 5$.
			
			The subgroup $H = \left\lbrace \text{diag}(a, \dots, a) \mid a \in \SL_2(p) \right\rbrace \cong \SL_2(p)$ has a single conjugacy class of elements 
			of order $4$. Let $L \leqslant H$ be any subgroup isomorphic to $2 \cdot A_5$ such that $u \in L$.
		\end{enumerate}
		Lemma \ref{centralizerofL} shows that in each of these cases $\lvert \rC_{\GL_{2m}(q)}(L) \rvert \leqslant |\GL_{m}(q)|$.
		
		By Lemma \ref{generalestimate} we obtain
		$$ \kappa \geqslant \frac{\frac{1}{q-1} |\GL_{m}(q)|^2}{|\GL_{m}(q)| \cdot \lvert \Aut(L) \rvert} \geqslant \frac{(q-1)^{m-1}q^{m^2-m}}{q^4}= (q-1)^{m-1} q^{m^2-m-4}.$$
		
		For a general $G = \SL_n(q)$ where $n = 2m + r$ with $r \in \{0, 1\}$ we can embed $\SL_{2m}(q)$ in $\SL_n(q)$. Then $\lvert \Aut(G) \rvert \leqslant q^{n^2} \leqslant  q^{4m^2+4m+1}$ and so for any $m \geqslant 3$ we obtain
		\begin{align*}
		15 \log_q \gk &\geqslant 15 (m^2-m-4) + 15(m-1) \log_q(q-1)\\
		&\geqslant 15m^2-15m-60 + 15 (m-1) \log_43  \\
		&\geqslant 4m^2+4m+1 \geqslant \log_q |\Aut(G)|
		\end{align*}
		using the estimate $\log_43 \geqslant \frac34$. Hence $\gk^{15} \geqslant |\Aut(G)|$ for any $m \geqslant 3$. If $m = 2$ then $\SL_2(q) \times \SL_2(q)$ is a subgroup;  therefore $\kappa \geqslant \beta_{\SL_2(q)}(u)^2 \geqslant q^{8/7}$ by Lemma \ref{PSL2} and hence $\kappa^{22} \geqslant q^{25} \geqslant \lvert \Aut(G) \rvert$. Similarly, if $m=1$ then $\kappa^{18} \geqslant q^9 \geqslant \lvert \Aut(G) \rvert$.
		\end{proof}
	
	\section{The remaining groups of Lie type}
	In this section we shall establish inequalities of the necessary form for general groups of Lie type by embedding groups of type $\SL_n(q)$ or $\pL_n(q)$ in them as subgroups of small index and using the results of the previous section.
This strategy breaks down for the Suzuki groups $Sz(q)$, because they do not contain copies of $\SL_2(q)$ or $\pL_2(q)$.  They are dealt with using an independent proof at the end of the section.

First we note some properties of the groups of Lie type; for details we refer the reader to Carter \cite{carter1989simple}.

Let $\mathfrak{g}$ be a simple Lie algebra over $\C$ and let $\mathfrak{h}$ be a Cartan subalgebra. Let $\Phi$ be the root system associated to $\mathfrak{h}$ and choose a set  $\Delta$ of simple roots.

Let $G$ be a group of Lie type $\mathfrak{g}$ over a field $F$ and let $X_\alpha$ denote the root subgroup associated to the root $\alpha$. Then $L_\alpha := \langle X_\alpha, X_{-\alpha} \rangle$ is isomorphic to $\SL_2(F)$ or $\pL_2(F)$; the isomorphism arises from a surjective group homomorphism $\phi_\alpha: \SL_2(F) \to \langle X_\alpha , X_{-\alpha} \rangle$ under which
$$\begin{pmatrix}
1 & t \\ 0 & 1
\end{pmatrix} \mapsto x_\alpha(t) \quad \text{ and } \quad \begin{pmatrix}
1 & 0 \\ t & 1
\end{pmatrix} \mapsto x_{-\alpha}(t).$$
	The subgroups $L$ defined in the proof of Lemma \ref{mainlemma} for $q \geqslant 4$ were subgroups of the diagonal subgroup of a direct product of copies of $\SL_2(q)$, generated by groups $\langle X_\alpha, X_{-\alpha}\rangle$ for simple roots that are pairwise non-adjacent in the Dynkin diagram of $A_n$.
	
	We will use the following estimates for the orders of finite groups of Lie type:
	\begin{itemize}
		\item $|X_n(q)| \leqslant q^{2n^2+n}$ for classical groups of types $X \in \{ B, C, D\}$;
		\item $|{}^2D_n(q^2)| \leqslant q^{2n^2-n}$ and $|{}^2A_n(q^2)| \leqslant q^{n^2+2n}$;
		\item $|{}^2E_6(q^2)| \leqslant |E_6(q)| \leqslant q^{78}$, $|E_7(q)| \leqslant q^{133}$ and $|E_8(q)| \leqslant q^{248}$.
	\end{itemize}
	The reader is referred to the Atlas \cite[p.\ xvi]{atlas} for the exact orders.
	
	We will also need estimates for the orders of the outer automorphism groups of groups of Lie type over finite fields. The exact orders can be found in the Atlas \cite[p.\ xvi]{atlas}. For our purposes it will suffice that $|\Out(G)| \leqslant q^3$ for any Chevalley group $G=X_n(q)$ or twisted group $G={}^mX_n(q^m)$.
	\begin{lemma}
		Let $G$ be a finite classical group, i.e. a simple group of Lie type $A_n$, $B_n$, $C_n$, $D_n$, $^2A_n$ or $^2D_n$ such that $\beta_G(u)\leqslant \gk$ for each $2$-element $u \in G$. Then $\lvert \Aut(G) \rvert \leqslant \gk^{154}$.
	\end{lemma}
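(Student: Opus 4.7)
The plan is to embed a subgroup isomorphic to $\SL_m(q)$ or $\SL_m(q^2)$ of rank $m$ comparable to the rank of $G$ into each of the remaining classical groups, and then apply Lemma \ref{mainlemma} together with Lemma \ref{basic}(a) to transfer the bound. The type $A$ case is already covered by Lemma \ref{mainlemma}, so only the types $B_n$, $C_n$, $D_n$, ${}^2A_n$ and ${}^2D_n$ remain.

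For each family I would identify a standard subgroup $H$ arising from a maximal parabolic. For $G$ of type $B_n(q)$, $C_n(q)$ or $D_n(q)$ with $n\geqslant 3$, the stabilizer of a maximal totally singular (or totally isotropic) subspace has Levi factor isomorphic to $\GL_n(q)$, so $G$ contains a subgroup $H$ isomorphic to $\SL_n(q)$ or $\pL_n(q)$. For $G = {}^2A_n(q^2) = \PSU_{n+1}(q)$, the analogous parabolic yields $H \cong \SL_m(q^2)$ or $\pL_m(q^2)$ with $m = \lfloor (n+1)/2\rfloor$. For $G = {}^2D_n(q^2)$, one gets $H \cong \SL_{n-1}(q)$ or $\pL_{n-1}(q)$ from the parabolic stabilising a totally singular $(n-1)$-space.

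By Lemma \ref{basic}(a), the hypothesis $\beta_G(u)\leqslant \gk$ passes to elements of $H$, and applying Lemma \ref{mainlemma} to $H$ (or Lemma 5 if $m=2$) gives $|\Aut(H)|\leqslant \gk^{22}$ (respectively $\gk^7$). Since $|H|\leqslant |\Aut(H)|$ and $|H|$ is of order roughly $q^{m^2}$ (or $q^{2m^2}$ in the unitary case), this yields a lower bound of the form $\gk \geqslant q^{cm^2}$ for an explicit constant $c>0$. Combining with the stated estimates $|X_n(q)|\leqslant q^{2n^2+n}$ for $X\in\{B,C,D\}$, $|{}^2A_n(q^2)|\leqslant q^{n^2+2n}$, $|{}^2D_n(q^2)|\leqslant q^{2n^2-n}$, and $|\Out(G)|\leqslant q^3$, a direct comparison of exponents shows $\gk^{154} \geqslant |\Aut(G)|$ whenever the rank of $G$ is large enough.

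The main obstacle is to handle the low-rank cases, in particular the twisted families where the rank of $H$ can drop to roughly half the rank of $G$, leaving a gap between $q^{cm^2}$ and $q^{2n^2}$ that is too large for the single-Levi strategy to close. There I would replace a single Levi subgroup by a product $\SL_2(q)^k$ (or $\SL_2(q^2)^k$) built from pairwise commuting subgroups $L_\alpha = \langle X_\alpha, X_{-\alpha}\rangle$ indexed by simple roots that are pairwise non-adjacent in the Dynkin diagram, exactly as in the final step of the proof of Lemma \ref{mainlemma}. Then Lemma \ref{basic}(d) multiplies the orbit counts and Lemma 5 bounds each factor, giving a comparable lower bound on $\gk$. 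Any remaining sporadic small-rank cases would be verified separately, and the numerical check that the single exponent $154$ suffices uniformly across all the families is the only delicate computational step.
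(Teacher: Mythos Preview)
Your strategy---embed a type-$A$ subgroup of comparable rank, invoke Lemma~\ref{mainlemma}, and compare exponents against the stated bounds on $|G|$ and $|\Out(G)|$---is exactly the paper's, though the paper phrases the embeddings via Dynkin sub-diagrams and twisted root systems rather than parabolic Levi factors. Your worry about low ranks is unnecessary: once the exponent inequalities are written down they hold uniformly for every rank in each family (the tight case, producing the $154$, is ${}^2A$, where the paper uses the subquotient $\pL_n(q)$ of ${}^2A_{2n-1}(q^2)$ over the \emph{smaller} field and checks $\gk^{154}\geqslant q^{7n^2}\geqslant q^{4n^2+4n+3}$ for all $n\geqslant 2$), with only ${}^2A_2(q^2)=\PSU_3(q)$ treated separately via $\pL_2(q)\leqslant\PSU_3(q)$ and Lemma~5; no product-of-$\SL_2$'s argument is needed.
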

	\begin{proof}
		We have already established a stronger result in Lemma \ref{mainlemma} for groups $A_n(q)$. Let $G = X_n(q)$ be a Chevalley group of type $X$ where $X \in \{ B, C, D\}$ over $\F_q$. The Dynkin diagram of $A_{n-1}$ embeds as a subgraph in the Dynkin diagram of $X_n$. This inclusion corresponds to a root subsystem $\Phi'$ of type $A_{n-1}$ and it leads to a subgroup $H = \langle X_\alpha \ | \ \alpha \in \Phi' \rangle$ with $\pL_n(q)$ as a quotient; therefore $\gk^{22} \geqslant q^{n^2}$ by Lemma \ref{mainlemma}. We can further estimate $\lvert \Aut(G) \rvert \leqslant |G| \cdot q^3 \leqslant q^{2n^2+n+3}$ and hence for any $n \geqslant 2$ we have 
		$$\gk^{88} \geqslant q^{4n^2} \geqslant q^{2n^2+n+3} \geqslant \lvert \Aut(G) \rvert.$$
		
		Now let $G = {^2X_n(q^2)}$ be a twisted Chevalley group of type $X$ over $\F_{q^2}$ where $X \in \{A, D\}$. Recall that every element of $G$ is fixed by the Steinberg automorphism $gf$ where $g$ is a graph automorphism corresponding to a non-trivial symmetry of the Dynkin diagram $X_n$ and $f$ corresponds to the involutory field automorphism $x \mapsto x^q$. The orbits of $\langle g \rangle$ acting on $\Delta$ correspond to roots in the twisted root system of $G$, which is not necessarily reduced; see \cite[Section 2.3]{classification} for details about twisted root systems arising in the different cases.

		The twisted root system of ${}^2D_n$ is $B_{n-1}$ and so $G$ has  $L_{n-2}(q^2)$ as a subquotient. By Lemma \ref{mainlemma} we have $\gk^{22} \geqslant q^{(n-1)^2}$ and hence $$\gk^{88} \geqslant q^{4(n-1)^2} \geqslant q^{2n^2-n+3} \geqslant \lvert \Aut(G) \rvert$$ for any $n \geqslant 4$. For any $n \geqslant 2$ the twisted root system of ${}^2A_{2n-1}$ is $C_n$ and the twisted root system of ${}^2A_{2n}$ is $BC_n$. In each case $A_{n-1}$ embeds as a subgraph and hence $G$ contains $\pL_n(q)$ as a subquotient. By Lemma \ref{mainlemma} we have $\gk^{22} \geqslant q^{n^2}$ and hence $$ \gk^{154} \geqslant q^{7n^2} \geqslant q^{4n^2+4n+3} \geqslant |\Aut(G)|.$$ Finally, the result also holds for ${}^2A_2(q^2) = \text{PSU}_3(q)$ since $\pL_2(q) \cong \text{PSU}_2(q) \leqslant \text{PSU}_3(q)$ and $| \Aut(\text{PSU}_3(q)) | = |\text{PSU}_3(q)| \cdot q^3 \leqslant q^{11} \leqslant \gk^{20}$ by Lemma 5.
	\end{proof}
	
		\begin{lemma}
		Let $G$ be an exceptional Chevalley group $E_6(q)$, $E_7(q)$, $E_8(q)$, $F_4(q)$ or $G_2(q)$, an exceptional Steinberg group  $^2E_6(q^2)$ or $^3D_4(q^3)$ or a Ree group $^2F_4(q)$ or $^2G_2(q)$. Assume that $\beta_G(u)\leqslant \gk$ for each $2$-element $u \in G$. Then $\lvert \Aut(G) \rvert \leqslant \gk^{142}$.
	\end{lemma}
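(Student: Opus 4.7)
The plan is to mirror the strategy of Lemma 8 for the classical groups: inside each exceptional group $G$ we locate a subquotient isomorphic to $\pL_n(q^k)$ for appropriate $n$ and $k$, and then apply Lemma \ref{mainlemma} (or Lemma 5 when $n=2$) together with the monotonicity of $\beta$ provided by Lemma \ref{basic}(a),(b). This gives an inequality of the form $\gk^{22}\geqslant |\Aut(\pL_n(q^k))|$ (or $\gk^{7}\geqslant |\Aut(\pL_2(q^k))|$ when $n=2$), which combined with the dimension estimates $|G|\leqslant q^d$ and $|\Out(G)|\leqslant q^3$ recorded before the lemma yields the uniform bound $|\Aut(G)|\leqslant \gk^{142}$.

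For the untwisted exceptional Chevalley groups the subquotient is produced by an $A_m$-type sub-root-system of the root system of $G$, found either in the Dynkin diagram of $G$ or, to maximise $m$, in its affine extension which provides maximal subgroups of maximal rank. In particular $E_8(q)$, $E_7(q)$, $E_6(q)$, $F_4(q)$ and $G_2(q)$ admit sub-root-systems of types $A_8$, $A_7$, $A_5$, $A_3$ and $A_2$ respectively, yielding corresponding $\pL_n(q)$ subquotients. Since the dimensions $d\in\{248,133,78,52,14\}$ exceed the corresponding $n^2$ only by a bounded multiple, the resulting exponents come out well below $142$ in each case.

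For the twisted exceptional groups we combine fixed-point subgroups of the associated untwisted Chevalley group with classical subgroups arising from the folded root system. Specifically, $^3D_4(q^3)$ contains $G_2(q)$ as the fixed-point subgroup of the triality (and hence the $\pL_3(q)$ subquotient found above), as well as a $\pL_2(q^3)$ subgroup coming from an $A_1$-type folded root subgroup over $\F_{q^3}$; $^2E_6(q^2)$ contains $F_4(q)$ as a fixed-point subgroup of a graph automorphism, and hence a $\pL_4(q)$ subquotient, together with classical subquotients over $\F_{q^2}$ from the folded root system of type $F_4$; and the Ree groups $^2F_4(q)$ and $^2G_2(q)$ each contain a copy of $\pL_2(q)$ (in the former via the classical subgroup $\mathrm{Sp}_4(q)$, in the latter as a maximal subgroup), to which Lemma 5 applies directly.

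The main obstacle is the case analysis for the twisted groups: one has to identify precisely which classical subquotient over which finite field is produced, since Galois descent typically yields classical groups over proper subfields of the nominal field of definition, and the exponents must then be tracked carefully to ensure the final bound stays within $\gk^{142}$. Once this bookkeeping is completed the estimates follow from routine arithmetic using the explicit order formulae tabulated at the start of the section. Finally, the Suzuki groups $^2B_2(q)$ are explicitly excluded from this lemma because they contain no copy of $\pL_2(q)$ or $\SL_2(q)$ and so none of the preceding machinery applies to them directly; they are treated by an independent argument later in the section.
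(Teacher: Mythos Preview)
Your plan is correct but considerably more elaborate than the paper's. For the $E_n(q)$ the paper simply uses the $A_{n-1}$ subdiagram of the Dynkin diagram of $E_n$ to obtain a $\pL_n(q)$ subquotient, whence $\gk^{22}\geqslant q^{n^2}$ from Lemma~\ref{mainlemma}, and then $\gk^{88}\geqslant q^{4n^2}\geqslant |\Aut(E_n(q))|$ for $n\in\{6,7,8\}$. There is no need to pass to the affine diagram to squeeze out an extra rank. For \emph{all} of the remaining groups $F_4(q)$, $G_2(q)$, $^2E_6(q^2)$, $^3D_4(q^3)$, $^2F_4(q)$, $^2G_2(q)$ the paper does something much cruder: it observes only that each contains a copy of $\SL_2(q)$ or $\pL_2(q)$, so Lemma~5 gives $\gk^7\geqslant q^4$, and since the largest of these groups satisfies $|\Aut(G)|\leqslant |{}^2E_6(q^2)|\cdot q^3\leqslant q^{81}$, one gets $\gk^{142}\geqslant q^{81}\geqslant |\Aut(G)|$ in a single stroke.

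Your route via $A_3\subset F_4$, $A_2\subset G_2$, and the fixed-point subgroups $F_4(q)\leqslant{}^2E_6(q^2)$, $G_2(q)\leqslant{}^3D_4(q^3)$ is legitimate and would yield smaller exponents than $142$ for those individual families, but none of that extra structure is needed to reach the stated bound. The paper's argument avoids the ``main obstacle'' you flag (tracking which field the classical subquotient lives over after Galois descent) by not using those subquotients at all: a single $\pL_2(q)$ root subgroup suffices uniformly. So your bookkeeping concern, while valid for your approach, simply does not arise in the paper's proof.
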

	\begin{proof}
		By considering Dynkin diagrams we obtain that $E_n(q)$ has $\pL_n(q)$ as a subquotient for $n \in \{ 6,7,8 \}$.  By Lemma \ref{mainlemma} we have $\gk^{22} \geqslant q^{n^2}$ and hence $\gk^{88} \geqslant q^{4n^2} \geqslant |\Aut(E_n(q))|$ for these values of $n$.
		
		For any other group $G$ in consideration, it suffices to note that $\SL_2(q)$ or $\pL_2(q)$ is a subgroup of $G$, so that $\gk^7 \geqslant q^4$ by Lemma 5. Hence $\gk^{142} \geqslant q^{81} \geqslant |{}^2E_6(q^2)| \cdot q^3 \geqslant |G| \cdot q^3 \geqslant |\Aut(G)|$.
	\end{proof}

Now we prove a corresponding result for the Suzuki groups, namely, the one stated above in Section 2.		

	\begin{proof}[Proof of Lemma $\ref{suzuki}$]
	For all of our notation throughout the proof, we refer the reader to \cite{bray2005characterization}.  As in that paper, take
	$$
u=T(0,1)=\begin{pmatrix}
1 & 0 & 0 & 0 \cr
0 & 1 & 0 & 0 \cr
1 & 0 & 1 & 0 \cr
1 & 1 & 0 & 1
\end{pmatrix}.
$$
Then $H:=\rC_G(u)=\{T(a,b)\mid a,b\in \F_q\}$ has order $q^2$ and it is normalized by the cyclic group $D=\{D(k)\mid k\in \F_q^\times \}$.  Thus $L=HD$ is a soluble subgroup and since $u\in L$ we have $\Theta_G(u)\setminus\{1\}\subseteq G\setminus L$.      Each element of $G\setminus L$ can be written uniquely
in the form $h_{1}dzh_{2}$ with $h_{1}$, $h_{2}\in H$, $d\in D$ and where
$$z:=\begin{pmatrix}
0 & 0 & 0 & 1 \cr
0 & 0 & 1 & 0 \cr
0 & 1 & 0 & 0 \cr
1 & 0 & 0 & 0
\end{pmatrix}.
$$
We calculate some entries of a typical element of $G\setminus L$: for $a, b, c, d \in \F_q$ and $k \in \F_q^\times$ we have
		$$E:=T(a, b)D(k)zT(c,d) =
		\begin{pmatrix}
		\;\;\cdot & k^{s/2+1}d & k^{s/2+1}c & k^{s/2+1} \\
		\;\;\cdot & \cdot & \cdot & k^{s/2+1}a \\
		\;\;\cdot & \cdot & \cdot & k^{s/2+1}(a^{1+s}+b) \\
		\;\;\cdot & \cdot & \cdot & \cdot \\
		\end{pmatrix} \eqno(2)$$
		where matrix entries replaced by dots are not important. 
				
Now let $x \in G \setminus L$ be an element such that the sequence $\theta_u^n(x)$ leads to a (non-trivial) eventual orbit. Then $x$ is conjugate (under an element of $H$) to an element in $HDz$ of the form $T(a, b)D(k)z$. The coefficients of the matrix $$M:=\theta_u(T(a, b)D(k)z)$$ are calculated in \cite[Lemma 3.2]{bray2005characterization}.
		
		Since $x$ leads to a (non-trivial) eventual orbit, we have $\theta_u(x) \in G \setminus L$ and this element is conjugate (under the element $T(e,f) \in H$ for some $e, f \in \F_q$) to a unique element of the form $T(A, B)D(K)z$. We now calculate $A, B, K$ in terms of $a, b, k$.
		\begin{align*}
		\theta_u(T(a, b)D(k)z) &= T(e, f)^{-1} T(A,B)D(K)zT(e, f) \\
		&= T(e, e^{s+1}+f)T(A,B)D(K)zT(e, f) \\
		&= T(e+A, eA^s+e^{s+1}+f+B)D(K)zT(e, f).
		\end{align*}
		The matrix on the right-hand side has the form in $(2)$ (after the substitution $a \mapsto e+A$, $b \mapsto eA^s + e^{s+1}+f+B$, $c \mapsto e$, $d \mapsto f$, $k \mapsto K$). Therefore
		\begin{align*}
		A&= (e+A)-e= (E_{24}-E_{13}) / K^{s/2+1} = (M_{24}-M_{13}) / K^{s/2+1}\\
		&= a(k^{2s+3}a^{s+1}+k^{2s+4} \xi) / K^{s/2+1}.
		\end{align*}
		In particular, if $a=0$, then $A=0$ too. We choose $a=0$ to facilitate the remaining calculations. 
		
		In a similar way we obtain
		$$K^{s/2+1}= E_{14} = M_{14} = k^{2s+4}(b+1)^s $$
		and after a few lines of calculation
		$$K = k^4(b+1)^{2s-2}.$$
		Finally $E_{12}=K^{s/2+1}f$ and $E_{34}=K^{s/2+1}(e^{s+1}+e^{s+1}+f+B)$ and so 
		$$K^{s/2+1}B =E_{34}-E_{12}=M_{34}-M_{12}= k^{2s+4}(b+1)^{s+1}.$$
		Hence $B=b+1$.  Therefore in the eventual orbit of $\theta_u$ to which $x$ leads,  the values $b,b+1$ alternate (as $\char \F_q=2$).
			
In \cite{bray2005characterization} it is shown that $x=T(0,b)D(1)z$ leads to a (non-trivial) eventual orbit for any $b \notin \F_2$. Therefore the number of eventual orbits of $\theta_u$ is at least the number of distinct pairs $\{b, b+1\}$ with $b \notin \F_2$, and so at least $\frac12(q-2)$. The group $\Out(G)$ is cyclic of order $2m+1 \leqslant q$ and so $\lvert \Aut(G) \rvert \leqslant q \lvert G \rvert \leqslant q^6$.  Since $q^6\leqslant (\frac{1}{2}(q-2))^{12}$ for all relevant $q$, the result follows.
	\end{proof}
	
	\section{Sporadic groups}

\begin{lemma}
Let $G$ be either a sporadic group or the Tits group ${}^2F_4(2)$.  Then $G$ has an involution $u$ with
$|\Aut(G)|\leqslant \beta_G(u)^{10}$.
\end{lemma}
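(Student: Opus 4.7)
The plan is to verify the inequality $|\Aut(G)|\leqslant \beta_G(u)^{10}$ by a case-by-case check across the $27$ groups in the list, combining Lemma \ref{generalestimate} (with a judiciously chosen $A_5$ subgroup $L$) with direct GAP computations where the groups are small enough to permit them.

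The main tool is as follows. If $u\in L\cong A_5$ is an involution of type $(1\,2)(3\,4)$, then $\beta_L(u)=8$ and $|\rC_{\Aut(L)}(u)|=|\rC_{S_5}(u)|=8$; so Lemma \ref{generalestimate} reduces to
$$\beta_G(u)\geqslant \frac{|\rC_G(u)|}{|\rC_G(L)|}.$$
First I would observe that, by Proposition \ref{atleast8}, every non-soluble group satisfies $\beta_G(u)\geqslant 8$ for some $2$-element $u$; consequently $\beta_G(u)^{10}\geqslant 2^{30}>10^9$, which already disposes of all the sporadic groups whose automorphism group has order at most $10^9$. This settles the Mathieu groups $M_{11},M_{12},M_{22},M_{23},M_{24}$, the small Janko groups $J_1,J_2,J_3$, and $HS$, $McL$, $He$ and the Tits group ${}^2F_4(2)'$, as can be read off from the ATLAS \cite{atlas}.

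For the remaining (larger) sporadic groups I would, for each group $G$, locate an $A_5$-subgroup $L$ meeting an involution class $2X$ in $L$ with the appropriate cycle structure — existence being guaranteed by the maximal subgroup lists in the ATLAS — and then estimate $|\rC_G(L)|$ either directly from the ATLAS entries for $L$-centralizers or via an upper bound coming from the normalizer of $L$. The centralizer orders $|\rC_G(u)|$ of involutions in sporadic groups are very large relative to $|G|^{9/10}$: for example in the Monster $M$ the involution class $2A$ has $|\rC_M(u)|\approx 4\cdot 10^{33}$ while $|\Aut(M)|^{1/10}\approx 2\cdot 10^5$, and similarly large margins hold for the Baby Monster, Fischer groups, $Co_1$, $Co_2$, $Co_3$, $Th$, $HN$, $Ly$, $Ru$, $Suz$, $O'N$, and $J_4$. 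In each case the bound from Lemma \ref{generalestimate} comfortably exceeds $|\Aut(G)|^{1/10}$.

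The main obstacle is not conceptual but bookkeeping: for each sporadic group one must confirm the presence of a suitable $A_5$ subgroup meeting the right involution class, and obtain an adequate upper bound on $|\rC_G(L)|$. For the small sporadic groups GAP \cite{gap} renders this automatic, as one can compute $\beta_G(u)$ directly for a representative involution in each class. For the very large groups — the Monster and Baby Monster in particular — a direct orbit count is not feasible and one must rely on ATLAS fusion data and the well-documented embeddings of $A_5$ into involution centralizers. The exponent $10$ is deliberately slack, built in precisely to absorb the looseness in these purely structural estimates without requiring sharp centralizer computations.
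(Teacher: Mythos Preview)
Your strategy is the same as the paper's: reduce via Proposition~\ref{atleast8} to a short list of large sporadic groups, and for each of those apply Lemma~\ref{generalestimate} with a minimal non-soluble subgroup $L$ and ATLAS data on centralizers.  The paper records the outcome in a table, choosing $L$ from among $A_5$, $L_3(2)$, $L_2(17)$, $L_2(23)$, $2\cdot A_5$, etc.\ (whichever makes $\rN_G(L)$ small), and for several groups ($Co_2$, $O'N$, $Fi_{22}$, $HN$, $Fi_{24}'$, $M$) it instead passes to a known subgroup already handled.

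There are two concrete problems with your write-up.  First, your cutoff list is wrong: $|\Aut(He)|=2\cdot|He|\approx 8\times 10^9$ and $|\Aut(McL)|=2\cdot|McL|\approx 1.8\times 10^9$, both exceeding $2^{30}\approx 1.07\times 10^9$, so neither is disposed of by the trivial bound $\beta_G(u)\geqslant 8$.  (The paper handles $He$ via the maximal subgroup $7{:}3\times L_3(2)$.)  Second, and more substantively, for this lemma the ``bookkeeping'' \emph{is} the proof: one must actually exhibit, for each of the dozen-or-so remaining groups, a specific $L$ meeting a specific involution class and verify the resulting numerical inequality.  Your proposal asserts that the margins are comfortable but checks none of them.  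Restricting to $L\cong A_5$ throughout is plausible but not automatic---the centralizer $\rC_G(A_5)$ can be large (e.g.\ it contains $J_2$ in $Co_1$), and the paper sidesteps exactly this by varying $L$ and by using the sharper first inequality of Lemma~\ref{generalestimate} with $|\rC_G(u)\cap\rN_G(L)|$ in place of $|\rC_G(L)|\cdot|\rC_{\Aut(L)}(u)|$.
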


\begin{proof}  By Theorem \ref{atleast8}, it suffices to consider groups $G$ such that $|\Aut(G)|\geqslant 8^{10}$.  These groups and $M_{11}$ appear in the table below.  For some of these groups, we obtain the necessary bound by comparison with a subgroup.  In the remaining cases,
there is a maximal subgroup $M$ which is the normalizer of a minimal non-soluble subgroup $L$.
In the table we give estimates for the maximal order of $\rC_G(u)$ for an arbitrary involution in $G$, and for $\rC_G(u)\cap\rN_G(L)$ with $u\in L$.  Our information is derived from  R.A.\ Wilson's paper \cite{robmax} and the Atlas \cite{atlas}.  We couple this with the estimate $\beta_L(u)\geqslant8$ from Theorem \ref{atleast8} and obtain the conclusion from Lemma \ref{generalestimate}. 
\end{proof}

\medskip
{\small
\begin{center}
\begin{tabular}{lccccc}
\hline \hline 
$G$ &  $|\Aut(G)|^{1/10}\!\leqslant$ &   $|\rC_G(u)|\!\geqslant$ & Max.\ Subgroup & $|\rC_{\rN(L)}(u)|\!\!\leqslant$& $\beta_G(u)\!\!\geqslant$\\ 
\hline 

$M_{11}$&  & $48$ &  $S_5$ & $8$  & 48
  \\

  \\

$He$& $10$ & $10^4$ &  $7{:}3\times L_3(2)$ & $168$  & $400$ 
  \\ 
$Ru$& $20$ & $10^5$ &  $5{:}4\times A_5$ & $80$ & $10^4$  
  \\ 
$Suz$& $20$ & $10^7$ &  $(A_6\times A_5)\cdot2$ & $8|A_6|$  & $2000$ 
  \\ 
 $Co_3$& $20$ & $10^5$ &  $A_4\times S_5$ & $96$  & $1000$ 
  \\  
$Co_2$& $30$ & $10^6$  & $(M_{11}\leqslant Co_2)$ && $48$
  \\ 
 $O'N$& $20$ & $10^5$ & $(M_{11}\leqslant O'N)$ & & $48$
  \\   
$Fi_{22}$& $30$ & $10^6$ &  $(M_{11}\leqslant   Fi_{22})$ &&$48$
  \\ 
 $HN$& $40$ & $10^6$ &  $(M_{11}\leqslant HN)$ && $48$
  \\  
 $Ly$& $60$ &  & $2\cdot A_{11}$ &cf.\ Lemma 3& $24^2$ 
  \\ 
$Th$& $60$ & $10^7$ &  $S_5$ & $8$ & $10^7$ 
  \\ 
  $Fi_{23}$& $100$ & $10^8$ &  $L_2(23)$ & $24$ & $4\times 10^6$ 
  \\
  $Co_1$& $100$ & $10^8$ &  $(A_5\times J_2)\colon2$ & $8|J_2|$ & $140$ 
  \\ 
  $J_4$& $100$ & $10^9$ &  $L_2(23)\colon2$ & $48$ & $10^7$ 
  \\ 
  $Fi_{24}'$ & $300$ & $10^{11}$ &  $(Fi_{23}\leqslant Fi_{24}')$ && $4\times 10^6$ 
  \\
  $B$& $300$ & $10^{16}$ &  $L_2(17)\colon2$ & $32$ & $10^{14}$ 
  \\  
  $M$& $10^{6}$ & $10^{26}$ &  $(2\cdot B\leqslant M)$ & $40$ &$10^{14}$ 
  \\ 
  
 \hline \hline 
\end{tabular} 
\end{center}}

At least for the sporadic groups it should be possible to improve the above  bound: almost certainly
$|\Aut(G)|\leqslant \beta_G(u)^{4}$ for all sporadic groups.  It may be that a similar bound holds
for all simple groups, but to establish this would require more computation and other methods.

\paragraph{Acknowledgements} The first author was supported by a CMS/SRIM bursary from the University of Cambridge.

\bigskip

\noindent \textsc{David Popović}\\
\textsc{Churchill College, Cambridge, UK}\\
\href{mailto:dp574@cam.ac.uk}{dp574@cam.ac.uk}

\bigskip

\noindent \textsc{John S. Wilson}\\
\textsc{Christ's College, Cambridge, UK}\\
\href{mailto:jsw13@cam.ac.uk}{jsw13@cam.ac.uk}

and

\noindent \textsc{Universit\"at Leipzig}\\
\textsc{Mathematisches Institut}\\
\textsc{04109 Leipzig, Deutschland}

	\end{document}